\numberwithin{equation}{section}
\newcommand{\mathset}[1]{\mathbbm{#1}}
\newcommand{\F}{\mathcal{F}}
\newcommand{\N}{\mathset{N}}
\newcommand{\levy}{L\'evy}
\newcommand{\R}{\mathset{R}}
\newcommand{\Q}{\mathset{Q}}
\newcommand{\pd}{\overline{\mathcal{P}}^d_\h}
\newcommand{\wpd}{weak\text{-}\overline{\mathcal{P}}^d_\h}
\newcommand{\h}{\mathcal{H}}
\newcommand{\xT}{(X_t)_{t\in T}}
\newcommand{\pid}{\overline \Pi_\G^d}
\newcommand{\s}{\mathcal{S}}
\newcommand{\B}{\mathcal{B}}
\newcommand{\G}{\mathcal{G}}
\newcommand{\dist}{\stackrel{\mathcal{D}}{=}}
\newcommand{\morf}[4][\to]{ #2 \colon #3 #1 #4}
\newcommand{\inv}{^{-1}}
\newcommand{\fre}{Fr\'echet}
\newcommand{\cdist}{\stackrel{\mathcal{D}}{\rightarrow}}
\newcommand{\ph}{\mathcal{P}}
\newcommand{\fancybreak}{\@ifstar{\@sfbreak}{\@fbreak}}
\newcommand{\@fbreak}[1]{\par
   \penalty -100
   \noindent\parbox{\linewidth}{\centering #1}\null
   \penalty -20
%%  \vskip -\onelineskip
   \@afterindentfalse
   \@afterheading}
\newcommand{\@sfbreak}[1]{\par
   \penalty -100
   \noindent\parbox{\linewidth}{\centering #1}\null
   \penalty -20
%%  \vskip -\onelineskip
   \@afterindenttrue
   \@afterheading}
\newcommand{\abs}[2][]{#1\lvert #2#1\rvert}
\newcommand{\norm}[2][]{#1\lVert #2#1\rVert}
\renewcommand{\phi}{\varphi}
\renewcommand{\epsilon}{\varepsilon}
\theoremstyle{plain}
\newtheorem{lemma}{Lemma}[section]
\newtheorem{proposition}[lemma]{Proposition}
\newtheorem{theorem}[lemma]{Theorem}
\newtheorem{corollary}[lemma]{Corollary}
\theoremstyle{definition}
\theoremstyle{definition}
\newtheorem{definition}[lemma]{Definition}
\theoremstyle{remark}
\newtheorem{remark}[lemma]{Remark}
\newcommand{\devnull}[1]{}
\title{Integrability of Seminorms}
  \author{Andreas Basse\thanks{Department of Mathematical Sciences,
 University of Aarhus,\newline  Ny Munkegade, DK-8000 \AA rhus
   C, Denmark. E-mail: basse@imf.au.dk} }
\begin{document}\maketitle

%%%%%%%%%%%%%%%%%%%%%%%%%%%%%%%%
\begin{abstract}
  We study integrability and equivalence of $L^p$-norms of polynomial
  chaos  elements. Relying on known results for Banach space valued
  polynomials, a simple technique is presented to obtain
  integrability results for random
  elements that are not necessarily limits of Banach space valued
  polynomials. This enables us to prove integrability results for a
  large class of seminorms of stochastic processes and  
 to answer,  partially, a question raised by C.\ Borell (1979,
 Séminaire de Probabilités, XIII, 1--3). 
\newline\newline
    \textit{Keywords:} integrability; chaos processes; seminorms; regulary varying
    distributions 
 \newline \newline
 \textit{AMS Subject Classification:} 60G17; 60B11; 60B12; 60E15
\end{abstract}

%%%%%%%%%%%%%%%%%%%%%%%%%%%%%%%%%%%%%%%%%%%%%%
\section{Introduction}
%%%%%%%%%%%%%%%%%%%%%%%%%%%%%%%%%%%%%%%%%%%%%%%%
Let $T$ denote a countable set, $X=\xT$  a stochastic process and  
$N$ a seminorm on $\R^T$. This paper focuses on 
  integrability and equivalence of $L^p$-norms of $N(X)$ in the case
  where $X$ is a weak chaos process; see Definition~\ref{def_svag}. Of
  particular interest is the supremum and the $p$-variation norm given
  by \begin{align}
  \label{eq:57}
   N(f)=\sup_{t\in T}\abs{f(t)}\quad \text{and}\quad
 N(f)=\sup_{n\geq 1}   \Big(\sum_{i=1}^{k_n}
  \abs{f(t^{n}_i)-f(t^{n}_{i-1})}^p\Big)^{1/p},\ p\geq 1, \quad 
\end{align} for $f\in \R^T$.  In the $p$-th variation case we assume moreover
$T=[0,1]\cap\Q$ and  $\pi_n=\{0=t^{n}_0<\dots<t^{n}_{k_n}=1\}$ are
nested subdivisions of $T$ satisfying $\cup_{n=1}^\infty\pi_n=T$. 
Note that if $N$ is given by \eqref{eq:57}, $B=\{x\in \R^T:N(x)<\infty\}$ and 
$\norm{x}=N(x)$ for $x\in B$, then  
$(B,\norm{\,\cdot\,})$ is a non-separable Banach space when
$T$ is infinite.

Our results partly unify and partly extend known results in this
area. For relations to the literature see Subsection~\ref{example}. 
We note, however,  that in the setting of the present paper we are
able to treat Rademacher chaos processes of arbitrary order as well as
infinitely divisible integral processes as in \eqref{eq:9} below.

%%%%%%%%%%%%%%%%%%%%%%%%%%%%%%%%%%%%%%%%%%%%%%%%%%%%%
\subsection{Chaos Processes and Condition $C_q$}
%%%%%%%%%%%%%%%%%%%%%%%%%%%%%%%%%%%%%%%%%%%%%%%%%%%%%
\label{setup}
Let $(\Omega,\F,P)$ denote a probability space.  When $F$ is a
topological space, a Borel measurable mapping 
$\morf{X}{\Omega}{F}$ is called an $F$-valued random element, however
when $F=\R$, $X$ is, as usual, called a random variable.  For each  $p>0$ and
random  variable $X$  we let
$\norm{X}_p:=E[\abs{X}^p]^{1/p}$, which defines a norm when $p\geq
1$; moreover,  let $\norm{X}_\infty:=\inf\{t\geq 0:P(\abs{X}\leq
t)=1\}$. When $F$ is a Banach
space, $L^p(P;F)$ denotes the space of 
all $F$-valued random elements, $X$, satisfying 
$\norm{X}_{L^p(P;F)}=E[\norm{X}^p]^{1/p}<\infty$. Throughout the
paper $I$ denotes a set and  for all
$\xi\in I$, $\h_\xi$ is a family of 
independent random variables. Set $\h=\{\h_\xi:\xi\in I\}$. 
Furthermore, $d\geq 1$ is a natural number 
and $F$ is a locally convex Hausdorff  
topological vector space
(l.c.TVS) with dual space $F^*$. Following
\citet{Fernique_book}, a  map $N$ from  $F$ into $[0,\infty]$  
is called a  pseudo-seminorm  if for all $x,y\in F$ and $\lambda\in
\R$, we have 
\begin{align}
  \label{eq:48}
  N(\lambda x)=\abs{\lambda} N(x)  \qquad \text{and} \qquad N(x+y)\leq
  N(x)+N(y). 
\end{align}
 For $\xi\in I$ let  $\ph_{\h_\xi}^d(F)$ denote the set of 
 $p(Z_1,\dots,Z_n)$ where $n\geq 1,\ Z_1,\dots,Z_n$ are different
 elements in $\h_\xi$ and $p$ is an $F$-valued tetrahedral polynomial of
       order $d$. Recall that $\morf{p}{\R^n}{F}$  is called an
       $F$-valued tetrahedral polynomial of order $d$ if there exist $
x_0,x_{i_1,\dots,i_k}\in F$ and $l\geq 1$ such that 
\begin{align}
  \label{eq:174}
  p(z_1,\dots,z_n)=x_0+\sum_{k=1}^d\sum_{1\leq i_1<\dots<i_k\leq l} x_{i_1,\dots,i_k}
  \prod_{j=1}^k z_{{i_j}}.
\end{align} 
For each  $F$-valued  random element $X$
we will write 
$X\in \pd(F)$  if  there exist $(\xi_k)_{k\geq
   1}\subseteq I$ and  $X_k\in \ph_{\h_{\xi_k}}^d(F)$ for $k\geq 1$
 such that $X_k\cdist X$. 
    Inspired by \citet{Talagrand}
we introduce the following definition: 
\begin{definition}\label{def_svag}\
  An $F$-valued random element $X$ is said to be a  weak 
chaos element of order $d$ associated with $\h$ if for all $n\geq 1$ and all 
$(x_i^*)_{i=1}^n\subseteq F^*$  we have
$(x_1^*(X),\dots,x_n^*(X))\in\pd(\R^n)$, and  in this case we write  
$X\in weak\text{-}\pd(F)$. Similarly, a real-valued stochastic  
process $\xT$ is said to be a weak chaos
process of  order $d$ associated with $\h$ if for all $n\geq 1$ and
$(t_i)_{i=1}^n\subseteq T$ we have
$(X_{t_1},\dots,X_{t_n})\in\pd(\R^n)$. 
  \end{definition}
\noindent An important example of a weak chaos process of order one
is $(X_t)_{t\in T}$ of the form  
\begin{align}
  \label{eq:9}
  X_t=\int_S f(t,s)\,\Lambda(ds),\qquad t\in T,
\end{align} where $\Lambda$ is an independently
scattered infinitely divisible random measure (or random measure for
short) on some non-empty  
space $S$ equipped with a $\delta$-ring $\s$, and $s\mapsto f(t,s)$ are 
$\Lambda$-integrable deterministic
functions in the sense of  \cite{Rosinski_spec}. To obtain the associated
$\h$ let $I$  be the set of all 
$\xi$ given by $\xi=\{A_1,\dots,A_n\}$ for some $n\geq 1$ and
disjoint sets $A_1,\dots,A_n$ in $\s$,  and let 
\begin{equation}
  \label{eq:46}
  \h_\xi=\{\Lambda(A_1),\dots,\Lambda(A_n)\}\qquad \text{and}\qquad
  \h=\{\h_\xi\}_{\xi\in I}.
\end{equation} Then, by definition of the stochastic integral
\eqref{eq:9} as the limit of integrals of simple functions,
$(X_t)_{t\in T}$ is a weak chaos process of order one 
associated with   $\h$. 

Another example is where $(Z_n)_{n\geq 1}$ is sequence of
independent random variables and
$x(t),x_{i_1,\dots,i_k}(t)\in\R$ are real numbers for which 
\begin{align}
  \label{eq:149}
  X_t=x(t)+\sum_{k=1}^d \sum_{1\leq i_1<\dots <i_k<\infty} x_{i_1,\dots,i_k}(t)
  \prod_{j=1}^k Z_{i_j},
\end{align} exists  in
probability for all $t\in T$; then  $X=(X_t)_{t\in T}$ is a weak chaos process
of order $d$ associated with $I=\{0\}$,  $\h_0=\{Z_n:n\geq
1\}$ and $\h=\{\h_0\}$.

In what follows we shall need  the next  conditions:
\\  \noindent\textbf{Notation, Condition~$C_q$}  
  \begin{itemize}\label{con_q}
  \item For $q\in (0,\infty)$, $\h$ is said to satisfy  $C_q$ if there
    exists    $\beta_1,\beta_2>0$ such that for
    all  $Z\in \cup_{\xi\in I}\h_\xi$ there exists
$c_Z>0$ with $P(\abs{Z}\geq c_Z)\geq \beta_1$ and 
    \begin{align}
      \label{eq:139}
 E[\abs{  Z}^q,\abs{Z}>s]\leq {}&\beta_2 s^q P(\abs{Z}>s),\qquad s\geq c_Z.
\end{align} 
\item \label{con_inf} $\h$ is said to satisfy  $C_\infty$ if 
  $\cup_{\xi\in I}\h_\xi\subseteq  L^1$ and 
  \begin{align}
    \label{eq:118}
  \sup_{\xi\in I}\sup_{Z\in
     \h_\xi}\left(\frac{\norm{Z-E[Z]}_\infty}{\norm{Z-E[Z]}_2}\right)=\beta_3<\infty. 
  \end{align}
  \end{itemize}
  \begin{remark}\label{eqi_norms}
    If $\h$ satisfies $C_q$ for some $q<\infty$ then for all $p\in (0,q)$ we have 
    \begin{equation}
      \label{eq:134}
      \sup_{\xi\in I}\sup_{Z\in
     \h_\xi}\frac{\norm{Z}_q}{\norm{Z}_p}\leq (\beta_2\vee 1)^{1/q}\beta_1^{-1/p}<\infty. 
    \end{equation} This follows by the next two estimates: 
   \begin{align}
      \label{eq:136}
      E[\abs{Z}^q]={}& E[\abs{Z}^q,\abs{Z}>c_Z]+E[\abs{Z}^q,\abs{Z}\leq
      c_Z]\\ \leq {}& \beta_2 c_Z^q  P(\abs{Z}>c_Z)+c_Z^q P(\abs{Z}\leq
      c_Z) \leq (\beta_2\vee 1) c_Z^q
    \shortintertext{and} 
   c_Z^p \beta_1\leq {}& c_Z^p P(\abs{Z}\geq c_Z)\leq E[\abs{Z}^p].
\end{align}
  \end{remark}
For example, when all $Z\in \cup_{\xi\in I} \h_\xi$ have the same
distribution, $\h$ satisfies
 $C_q$ for all $q\in (0,\alpha)$ for $\alpha>0$ if   $x\mapsto P(\abs{Z}>x)$ is
regulary varying with index $-\alpha$,
 by  Karamata's Theorem; see \citet[Theorem~1.5.11]{Bingham}. 
In particular,  if the common distribution is symmetric $\alpha$-stable
for some $\alpha\in (0,2)$ then  $\h$ satisfies
$C_q$ for all $q\in (0,\alpha)$. If the common distribution is
Poisson, exponential, Gamma or Gaussian 
 then   $C_q$ is satisfied for all $q>0$. 
Finally $\h$ satisfies $C_\infty$ if and only if the common
distribution has compact support.

As we shall see in Section~\ref{sec_main}, $C_q$ is crucial in order
to obtain integrability results and equivalence of $L^p$-norms, so  let us
consider some cases where the important example \eqref{eq:9} does or
does not satisfy $C_q$. For this purpose let us introduce the following
distributions: The inverse Gaussian distribution IG$(\mu,\lambda)$ with
 $\mu,\lambda>0$ is the distribution on $\R_+$ with density 
\begin{equation}
  \label{eq:146}
  f(x;\mu,\lambda)=\left[\frac{\lambda}{2\pi
  x^3}\right]^{1/2}e^{-\lambda(x-\mu)^2/(2\mu^2x)},\qquad x>0.
\end{equation} Moreover, the normal inverse Gaussian distribution
NIG$(\alpha,\beta,\mu,\delta)$ with $\mu\in\R,\ \delta\geq 0$, and
$0\leq \beta\leq \alpha$, is symmetric if and only if
$\beta=\mu=0$, and in this case  it has the following density 
\begin{equation}
  \label{eq:147}
  f(x;\alpha,\delta)=\frac{\alpha e^{\delta \alpha} }{\pi \sqrt{1+x^2\delta^{-2}} }
  K_1\left(\delta \alpha (1+x^2\delta^{-2})^{1/2}\right),\qquad x\in\R,
\end{equation} where $K_1$ is the modified Bessel function of the
third kind and index 1 given by $K_1(z)=\frac{1}{2}
\int_0^\infty e^{-z(y+y\inv)/2}\,dy$ for $z>0$. 

For each finite number $t_0>0$, a random measure  $\Lambda$ is said to
be induced by a \levy\ process $Y=(Y_t)_{t\in [0,t_0]}$ 
if $S=[0,t_0]$, $\s=\B([0,t_0])$ and 
$\Lambda(A)=\int_A \,dY_s$ for all $A\in \s$. 
\begin{proposition}\label{pro_levy} Let  $t_0\geq 1$ be a finite number,
  $\Lambda$ a random measure induced by a L\'evy process  
  $Y=(Y_t)_{t\in [0,t_0]}$ and $\h$ be
  given by \eqref{eq:46}. 
  \begin{enumerate}[(i)]
   \item \label{IG} If  $Y_1$ has an IG-distribution, then $\h$
  satisfies $C_q$ if and only if   $q\in (0,\frac{1}{2})$. 
\item \label{NIG} If $Y_1$ has a symmetric NIG-distribution, 
  then $\h$  satisfies $C_q$ if and only if   $q\in (0,1)$. 
\item \label{not_4} If  $Y$ is non-deterministic and has no Gaussian component, then $\h$ 
  does not  satisfy $C_q$ for any $q\geq 2$. In fact, for all
  square-integrable non-deterministic \levy\ processes $Y$ with no Gaussian
  component we have  that 
    $\lim_{t\rightarrow 0}\norm{Y_t}_2/\norm{Y_t}_1=\infty$.
    \end{enumerate}
\end{proposition}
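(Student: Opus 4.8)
The plan is to reduce the whole statement to the one-parameter family $\{Y_t:0<t\le t_0\}$. Since $Y$ is a \levy\ process, for disjoint $A_1,\dots,A_n\in\s$ the variables $\Lambda(A_j)=\int_{A_j}dY_s$ are independent and $\Lambda(A)\dist Y_{\abs A}$ for every $A\in\s$ (with $\abs A$ its Lebesgue measure); hence $\cup_\xi\h_\xi$ agrees in law with $\{Y_t:0<t\le t_0\}$, and checking $C_q$ means producing $\beta_1,\beta_2$ and levels $c_t$ that work uniformly in $t\in(0,t_0]$. Two distributional facts drive everything: by matching Laplace/characteristic exponents and closure under convolution one has $Y_t\dist\mathrm{IG}(\mu t,\lambda t^2)$ in case \eqref{IG} and $Y_t\dist\mathrm{NIG}(\alpha,0,0,\delta t)$ in case \eqref{NIG}; and the small-$t$ behaviour is governed by the singularity of the \levy\ measure $\nu$ at $0$. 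For IG, $\nu(dx)\propto x^{-3/2}e^{-\gamma x}\,dx$ on $(0,\infty)$, so $\nu((x,\infty))\sim c\,x^{-1/2}$ as $x\downarrow0$ (stable index $\alpha=\tfrac12$); for symmetric NIG, $\nu(dx)\propto \abs x^{-1}K_1(\alpha\abs x)\,dx\sim c\,\abs x^{-2}\,dx$ near $0$, so $\nu(\{\abs y>x\})\sim c\,x^{-1}$ (index $\alpha=1$). Since $t_0\ge1$, the scale $1$ is an admissible reference level throughout.

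For \eqref{not_4} I would first record that $C_q$ with $q<\infty$ forces $E[\abs Z^q]<\infty$, by \eqref{eq:136}; thus for $q\ge2$ it forces $Y_t\in L^2$, so if $Y$ is not square integrable $C_q$ already fails. When $Y$ is square integrable I prove the displayed limit and combine it with Remark~\ref{eqi_norms}: were $C_q$ to hold for some $q\ge2$, then by monotonicity of $L^p$-norms and \eqref{eq:134} with $p=1$ we would get $\norm{Y_t}_2\le\norm{Y_t}_q\le M\norm{Y_t}_1$ uniformly in $t$, contradicting $\norm{Y_t}_2/\norm{Y_t}_1\to\infty$. To prove the latter, set $X_t=Y_t-E[Y_t]$; as $Y$ has no Gaussian part, $\var(Y_t)=t\sigma^2$ with $\sigma^2=\int x^2\,\nu(dx)\in(0,\infty)$, whence $\norm{Y_t}_2\ge\sigma\sqrt t$ while $\norm{Y_t}_1\le\norm{X_t}_1+t\abs{E[Y_1]}$; it therefore suffices to show $\norm{X_t}_1=o(\sqrt t)$. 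I obtain this from $X_t/\sqrt t\cdist0$: the characteristic function of $X_t/\sqrt t$ is $\exp\!\big(t\psi(u/\sqrt t)\big)$ with $\psi(u)=\int(e^{iux}-1-iux)\,\nu(dx)$, and splitting the integral at $\abs x\le R\sqrt t$ versus $\abs x>R\sqrt t$, bounding the first part by $\tfrac{u^2}2\int_{\abs x\le R\sqrt t}x^2\,\nu(dx)\to0$ and the second by $O(\sigma^2/R)$ uniformly in $t$ and then letting $R\to\infty$, gives $t\psi(u/\sqrt t)\to0$. Finally $\{X_t/\sqrt t\}$ is bounded in $L^2$, hence uniformly integrable, so convergence in probability upgrades to $E\abs{X_t/\sqrt t}\to0$, i.e.\ $\norm{X_t}_1=o(\sqrt t)$.

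For the ``only if'' directions of \eqref{IG} and \eqref{NIG} I again use Remark~\ref{eqi_norms}, disproving $C_q$ by exhibiting $p<q$ with $\sup_t\norm{Y_t}_q/\norm{Y_t}_p=\infty$. A uniform lower bound on high moments comes from a single large jump: for the IG subordinator $P(Y_t\ge x)\ge1-e^{-t\nu((x,\infty))}\ge\tfrac12 t\,\nu((x,\infty))$ once $t\nu((x,\infty))\le1$, with an analogous one-jump estimate for NIG, so that $P(\abs{Y_t}>1)\ge c\,t$ and hence $\norm{Y_t}_q\ge P(\abs{Y_t}>1)^{1/q}\gtrsim t^{1/q}$. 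A matching upper bound on low moments comes from the stable core: in the IG case $Y_t$ is stochastically dominated by the $\tfrac12$-stable subordinator $S_t\dist t^2S_1$ (couple through the smaller \levy\ measure, the difference being a compound Poisson subordinator), so $E[Y_t^p]\le t^{2p}E[S_1^p]\lesssim t^{2p}$ for $p<\tfrac12$; in the NIG case the standard \levy\ tail bound $P(\abs{Y_t}>x)\lesssim t\big(\nu(\{\abs y>x\})+x^{-2}\!\int_{\abs y\le x}y^2\,\nu(dy)\big)$ integrates to $E\abs{Y_t}^p\lesssim t^p$ for $p<1$. Choosing $p$ below the threshold then gives $\norm{Y_t}_q/\norm{Y_t}_p\gtrsim t^{1/q-2}$ (IG, $q>\tfrac12$) resp.\ $t^{1/q-1}$ (NIG, $q>1$), which tends to $\infty$. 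The boundary cases $q=\alpha$ (i.e.\ $\tfrac12$ and $1$) are handled separately, since the moment ratio there diverges only logarithmically; I exclude them by inserting the matching lower tail bound $P(\abs{Y_t}>x)\gtrsim t\,\nu(\{\abs y>x\})$ directly into \eqref{eq:139} at the quantile level $s=c_t\asymp t^{1/\alpha}$, where the left side is $\gtrsim t\int_{c_t}^1 x^{-1}\,dx\asymp t\log(1/t)$ while the right side is $\asymp(\beta_2-1)\,t$, a contradiction for small $t$.

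For the ``if'' directions---$C_q$ for $q<\tfrac12$ (IG) and $q<1$ (NIG)---I would use the integrated form of \eqref{eq:139}: since $E[\abs Z^q,\abs Z>s]=s^qP(\abs Z>s)+q\int_s^\infty x^{q-1}P(\abs Z>x)\,dx$, condition $C_q$ is equivalent to $q\int_s^\infty x^{q-1}P(\abs Z>x)\,dx\le(\beta_2-1)s^qP(\abs Z>s)$ for $s\ge c_Z$. I take $c_t$ a fixed-probability quantile (order $t^2$ for IG, order $t$ for NIG) so that $P(\abs{Y_t}\ge c_t)\ge\beta_1$ uniformly, and verify the integrated bound from two-sided tail estimates $P(\abs{Y_t}>x)\asymp t\,x^{-\alpha}$ on the stable range $c_t\le x\lesssim1$ together with the rapid exponential decay for $x\gtrsim1$: on the stable range the factor $t$ cancels and $q\int_s^1 x^{q-1}P(\abs{Y_t}>x)\,dx\asymp\tfrac{q}{\alpha-q}\,t\,s^{q-\alpha}$ matches $s^qP(\abs{Y_t}>s)\asymp t\,s^{q-\alpha}$ exactly because $q<\alpha$, giving a uniform constant $\beta_2-1\asymp q/(\alpha-q)$, while the exponential range contributes a vanishing ratio. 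The hard part is precisely this positive direction: establishing the two-sided tail bounds \emph{uniformly} in $t\in(0,t_0]$ and gluing the stable-core regime to the exponential-tail regime with $t$-independent constants. The lower bounds come from the one-big-jump estimate and the upper bounds from stochastic domination by the stable process (IG) resp.\ the \levy\ tail inequality (NIG); the real work is the uniformity of the constants as $t\downarrow0$, where the self-similar scaling $S_t\dist t^{1/\alpha}S_1$ of the local stable limit is what makes all the estimates collapse onto a single scale.
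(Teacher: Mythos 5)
Your treatment of part (iii) is sound and structurally the same as the paper's: both reduce to $\norm{Y_t}_2\sim\sqrt{t\,\text{Var}(Y_1)}$ and $\norm{Y_t}_1=o(t^{1/2})$, and both use \eqref{eq:136} (implicitly in the paper) to dispose of the non-square-integrable case; you prove the $o(t^{1/2})$ estimate by showing $(Y_t-E[Y_t])/\sqrt{t}\to 0$ in probability via the exponent estimate and upgrading through $L^2$-boundedness and uniform integrability, whereas the paper symmetrizes and integrates $1-\Re\phi_{Y_t}(s)$ against $s^{-2}$; both work. Your \emph{only if} arguments for (i) and (ii) also differ from the paper's, which lets $n^{2}Y_{1/n}$, resp.\ $n^{-1}Y_{1/n}$, converge to a stable limit with infinite $\alpha$-moment and combines Remark~\ref{eqi_norms} with Fatou; your moment-ratio route, with the separate logarithmic argument at $q=\alpha$ and the (needed, and only implicit in your sketch) remark that $P(\abs{Z}\geq c_Z)\geq\beta_1$ forces $c_Z\lesssim t^{1/\alpha}$, can be made to work, at the cost of more explicit tail estimates.

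The genuine gap is in the \emph{if} directions, which you yourself identify as the hard part but then leave resting on estimates that cannot close it. Condition \eqref{eq:139} must hold for every $s\geq c_Z$, in particular for $s\geq 1$, where the tail of $Y_t$ is exponential: the one-big-jump lower bound gives $P(\abs{Y_t}>s)\asymp t\,s^{-3/2}e^{-\gamma s}$ there, with $\gamma$ the exponential rate of the \levy\ density. The upper bounds you invoke---stochastic domination by the $\tfrac12$-stable subordinator for IG, the generic \levy\ tail inequality for NIG---only yield $P(\abs{Y_t}>x)\lesssim t\,x^{-1/2}$, resp.\ $\lesssim t\,x^{-2}$, in that range, so your integrated form gives $q\int_s^\infty x^{q-1}P(\abs{Y_t}>x)\,dx\lesssim t\,s^{q-1/2}$ (resp.\ $t\,s^{q-2}$), which is \emph{not} $\lesssim s^qP(\abs{Y_t}>s)\asymp t\,s^{q-3/2}e^{-\gamma s}$ once $s$ is large. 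Hence ``the exponential range contributes a vanishing ratio'' is not something your stated tools deliver: you need upper tail bounds that are themselves exponential with the matching rate, carry the factor $t$, and are uniform in $t\in(0,t_0]$ (a plain Chernoff bound fails, since it loses the factor $t$). This is precisely where the paper's proof does its work: it uses that $Z/c_Z\dist\mathrm{IG}(\mu/(\lambda m(A)),1)$ has an explicit density which on $[1,\infty)$ is comparable, uniformly in $A$, to $x^{-3/2}\exp[-x(\lambda m(A))^2/(2\mu^2)]$, reducing \eqref{eq:139} to the single scale-invariant calculus estimate \eqref{eq:91}; and for NIG it conditions on the Gaussian factor in the mixture $Z/c_Z\dist\epsilon\,U_Z^{1/2}$, applies the IG case to $U_Z$ with exponent $q/2$, and controls the Gaussian-tail region with $K_1(z)\geq e^{-z}/z$ to obtain \eqref{eq:145}. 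Until you supply analogues of these exact-density or mixture estimates, the positive halves of (i) and (ii) remain a plan rather than a proof.
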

\noindent 
 By the scaling property it is not difficult to show that if 
 $\Lambda$ is a symmetric $\alpha$-stable random measure with
 $\alpha\in (0,2]$,
     then $\h$ satisfies $C_q$  for all $q>0$ when $\alpha=2$ and for
      all $q<\alpha$ when $\alpha<2$.  
For $\alpha<2$ we have the following minor extension: Assume  
$\Lambda$ is induced by a \levy\ process $Y$ with  
\levy\ measure $\nu(dx)=f(x)\,dx$ 
where $f$ is a symmetric function satisfying
$c_1  \abs{x}^{-1-\alpha}\leq f(x)\leq c_2
  \abs{x}^{-1-\alpha}$ for some $c_1,c_2>0$,
   then $\h$ satisfies $C_q$ if and only if
   $q<\alpha$. Proposition~\ref{pro_levy} gives some insight about
   when $C_q$ is satisfied; however, it would be  interesting to develop  more
   general conditions. We postpone the proof of Proposition~\ref{pro_levy} 
 to Section~\ref{proofs}.

%%%%%%%%%%%%%%%%%%%%%%%%%%%%%%%%%%%%%%
\subsection{Results on Integrability of Seminorms}
%%%%%%%%%%%%%%%%%%%%%%%%%%%%%%%%%%%
\label{example}
Let $T$ denote a countable set, $X=\xT$  a real-valued stochastic process and 
$N$ a measurable pseudo-seminorm on $\R^T$ such that $N(X)<\infty$
a.s. For  $X$  Gaussian \cite{Fernique_int}
%\citet[Th\`eor\'eme~1.2.3]{Fernique_book} 
shows that $e^{\epsilon
  N(X)^2}$ is integrable for some $\epsilon>0$. This result is
extended to Gaussian chaos processes by
\citet[Theorem~4.1]{Borell}. Moreover,  if $X$ is $\alpha$-stable for some
$\alpha\in (0,2)$, \citet[Theorem~3.2]{Arcosta_stable} shows that $N(X)^p$ is
integrable for all $p<\alpha$. When $X$ is infinitely divisible
 \cite{Ros_Sam} provide conditions on the \levy\
measure ensuring integrability of $N(X)$. 
See also \citet{Hoffmann_seminorms} for
further results. 

Given a sequence $(Z_n)_{n\geq 1}$ of independent random variables, 
\citet{Borell_Polynomial} studies, under the condition
\begin{align}
  \label{eq:74}
  \sup_{n\geq 1}
  \frac{\norm{Z_n-E[Z_n]}_q}{\norm{Z_n-E[Z_n]}_2}<\infty, \qquad q\in (2,\infty], 
\end{align} 
 integrability of Banach space valued
random elements which are limits in probability of tetrahedral
polynomials associated with $(Z_n)_{n\geq 1}$. For $q=\infty$,
\eqref{eq:74} is $C_\infty$  but when  $q<\infty$   
\eqref{eq:74} is weaker than $C_\infty$, at least when $(Z_n)_{n\geq 1}$ are
centered random variables. As shown in 
\citet{Borell_Polynomial}, \eqref{eq:74} implies 
equivalence of $L^p$-norms  for  Hilbert space valued
tetrahedral polynomials for $p\leq q$, but not for Banach space valued
tetrahedral polynomials except in the case $q=\infty$. Under the
assumption that  $(Z_n)_{n\geq 1}$ are symmetric random variables satisfying $C_q$,
\citet[Theorem~6.6.2]{Kwapien_R_S} show that we have equivalence of
$L^p$-norms in the above setting. Contrary to
\citet{Borell_Polynomial}, \cite{Kwapien_R_S} and others, we   
consider random elements which  are not necessarily limits of  
tetrahedral polynomials, and also more general spaces are considered. 
This enables us to obtain our integrability results 
for seminorms of stochastic processes.

Weak chaos processes appear in the context of multiple integral
processes; see e.g.\ \citet{Multiple_alpha} for the $\alpha$-stable
case.   Rademacher chaos processes are applied repeatedly 
when studying $U$-statistics; 
see \citet{Pena}. They are also 
used to study  infinitely divisible chaos
processes; see \citet{Suf_con},
\citet{Rosinski_sym}, \citet{Basse_Pedersen} and 
others. Using the results of the present paper,
\cite{Basse_Graversen}
extend some results on Gaussian 
semimartingales (e.g.\ \citet{Gau_Qua} and \citet{Stricker_gl}) 
to a large class of chaos processes.

%%%%%%%%%%%%%%%%%%%%%%%%%%%%%%%%%%%%%%%%%%%%%%%%
\section{Main results}
%%%%%%%%%%%%%%%%%%%%%%%%%%%%%%%%%%%%%%%%%%%%%%%%
\label{sec_main}
The next lemma, which is a combination of several results, is crucial for this paper.
\begin{lemma}\label{Borell} Let $F$ denote a Banach space and $X$ 
	an $F$-valued tetrahedral polynomial of order $d$ in the independent random variables 
	$Z_1,\dots,Z_n$. Assume that $\h=\{\h_0\}$ satisfies $C_q$ for some
  $q\in (0,\infty]$, where   $\h_{0}=\{Z_1,\dots,Z_n\}$; if $d\geq 2$
  and $q<\infty$ assume moreover  
  that $Z_1,\dots,Z_n$  are symmetric. Then  
  for all $0<p<r\leq q$ with  $r<\infty$  we have that 
  \begin{align}
    \label{eq:97}
    \lVert X\rVert_{L^r(P;F)}\leq k_{p,r,d,\beta}
 \lVert X\rVert_{L^p(P;F)}<\infty,
  \end{align}  where $k_{p,r,d,\beta}$ depends only  on
  $p,q,d$ and the $\beta$'s from $C_q$. If $q=\infty$ and $p\geq 2$ we may choose
  $k_{p,r,d,\beta}=A_d \beta^{2d} r^{d/2}$ with $A_d=2^{d^2/2+2d}$.  
 \end{lemma}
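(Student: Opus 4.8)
The plan is to prove the two assertions of the lemma separately---finiteness of $\norm{X}_{L^r(P;F)}$ and the comparison inequality---and to split the comparison into the two regimes $q<\infty$ and $q=\infty$, which rest on the two hypercontractivity theorems the statement is assembled from. First I would record that $C_q$ forces $Z_i\in L^r$ for every $r\leq q$ with $r<\infty$: for $q<\infty$ this is exactly estimate \eqref{eq:136} of Remark~\ref{eqi_norms}, giving $E[\abs{Z_i}^q]\leq(\beta_2\vee 1)c_{Z_i}^q<\infty$, while for $q=\infty$ condition $C_\infty$ gives $Z_i-E[Z_i]\in L^\infty$, hence $Z_i\in L^r$ for all finite $r$. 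Writing $X=x_0+\sum_{k=1}^d\sum_{i_1<\dots<i_k}x_{i_1,\dots,i_k}\prod_{j=1}^k Z_{i_j}$ as in \eqref{eq:174} and using independence, each summand satisfies $\norm{x_{i_1,\dots,i_k}\prod_j Z_{i_j}}_{L^r(P;F)}^r=\norm{x_{i_1,\dots,i_k}}^r\prod_{j=1}^k E[\abs{Z_{i_j}}^r]<\infty$, so the triangle inequality over this finite sum gives $\norm{X}_{L^r(P;F)}<\infty$.

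\emph{Comparison, $q<\infty$.} Here I would invoke the Kwapie\'n--Woyczy\'nski equivalence of $L^p$-norms, \citet[Theorem~6.6.2]{Kwapien_R_S}, applied to the Banach-valued tetrahedral polynomial $X$ in the variables $Z_1,\dots,Z_n$ satisfying $C_q$: for $d\geq 2$ these are symmetric by hypothesis, while for $d=1$ the comparison for sums of independent variables holds under $C_q$ alone. This yields a constant $k_{p,r,d,\beta}$ depending only on $p,r,q,d$ and $\beta_1,\beta_2$ with $\norm{X}_{L^r(P;F)}\leq k_{p,r,d,\beta}\norm{X}_{L^p(P;F)}$ for all $0<p<r\leq q$. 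The one point to verify is that $C_q$ matches the moment hypothesis under which their theorem is stated, which follows from Remark~\ref{eqi_norms}.

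\emph{Comparison, $q=\infty$.} Expanding $Z_i=E[Z_i]+(Z_i-E[Z_i])$ keeps $X$ a tetrahedral polynomial of order $\leq d$ in centered variables, and $C_\infty$ passes to these, so I may assume the $Z_i$ are centered with $\norm{Z_i}_\infty\leq\beta\norm{Z_i}_2$, where $\beta=\beta_3$. The core estimate is $\norm{X}_{L^r(P;F)}\leq A_d\beta^{2d}r^{d/2}\norm{X}_{L^2(P;F)}$ for $2\leq r<\infty$, which is Borell's hypercontractive bound \citep{Borell_Polynomial}; I would prove it by induction on $d$, conditioning on $\sigma(Z_1,\dots,Z_{n-1})$, writing $X=U+Z_nV$ with $\deg V\leq d-1$, and applying a one-dimensional moment inequality for the centered bounded variable $Z_n$ that trades a factor of order $\sqrt{r}\,\beta$ for passing from $V$ to $Z_nV$. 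Since $p\geq 2$ gives $\norm{X}_{L^2}\leq\norm{X}_{L^p}$, this produces the asserted explicit constant at once. For the remaining range $0<p<2$ I would upgrade to $\norm{X}_{L^2}\leq k'\norm{X}_{L^p}$ by a reverse-H\"older (Paley--Zygmund) interpolation: fixing $s>2$ and $\theta\in(0,1)$ with $\tfrac12=\theta/p+(1-\theta)/s$, H\"older gives $\norm{X}_{L^2}\leq\norm{X}_{L^p}^\theta\norm{X}_{L^s}^{1-\theta}$, and inserting the core estimate $\norm{X}_{L^s}\leq C_s\norm{X}_{L^2}$ and rearranging yields $\norm{X}_{L^2}\leq C_s^{(1-\theta)/\theta}\norm{X}_{L^p}$, which then chains with the core estimate.

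\emph{Main obstacle.} The delicate point is the constant bookkeeping in the $q=\infty$ induction: recovering the precise exponents $A_d=2^{d^2/2+2d}$ and $\beta^{2d}$ requires tracking how the per-level factor $\sqrt{r}\,\beta$ and the combinatorial loss at each conditioning step telescope over the $d$ levels of the recursion, and checking that the one-dimensional step remains valid for Banach-valued $U,V$ rather than only in the scalar or Hilbert setting. The milder obstacle is aligning the hypotheses of \citet{Kwapien_R_S} with condition $C_q$ so that the resulting constant depends only on the quantities claimed.
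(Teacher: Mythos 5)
Your overall architecture is the same as the paper's: for $q<\infty$ the comparison is lifted from \citet{Kwapien_R_S} (their 2.2.4 for $d=1$, Theorem~6.6.2 for $d\geq 2$), for $q=\infty$ one uses Borell's hypercontractive bound for $p\geq 2$, and the range $0<p<2$ is recovered by exactly the H\"older/interpolation argument you describe (this is \citet[Lemme~1.1]{Pisier_Borell}, which the paper cites). Your preliminary reduction (finiteness of moments via Remark~\ref{eqi_norms}, centering in the $q=\infty$ case) is fine.

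The genuine gap is the explicit constant $k_{p,r,d,\beta}=A_d\beta^{2d}r^{d/2}$ with $A_d=2^{d^2/2+2d}$, which is the one part of the statement that is \emph{not} available by citation: \citet{Borell_Polynomial} gives no explicit $A_d$, and your plan --- an induction on $d$ with conditioning on $\sigma(Z_1,\dots,Z_{n-1})$ and a per-level loss of order $\sqrt{r}\,\beta$ --- is left unexecuted at precisely the point you yourself call the main obstacle. As sketched it does not even match the target: $d$ levels of a $\sqrt{r}\,\beta$ loss would produce $r^{d/2}\beta^{d}$ rather than $r^{d/2}\beta^{2d}$, and the validity of the one-dimensional step for Banach-valued coefficients and merely centered (not symmetric) $Z_i$ is asserted, not checked. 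The paper obtains the constant by a different and specific device: the deterministic extrapolation Lemma~\ref{gen_arg} (if $N(\sum_{k=0}^d\lambda^k x_k)\leq 1$ on $[-\epsilon,\epsilon]$ then $N(\sum_k x_k)\leq 2^{d^2/2+d}\epsilon^{-d}$), applied inside Borell's proof of his Theorem~4.1; this is what yields the factors $\beta^{2d}$ and $2^{d^2/2+2d}$, and no counterpart of it appears in your proposal. A second, smaller gap: for $q\in(0,1]$ the Kwapie\'n--Woyczy\'nski theorem does not apply as stated; the paper has to extract the result from the \emph{proof} of Theorem~6.6.2 together with their Remark~6.9.1, a case your remark that ``$C_q$ matches the moment hypothesis'' does not cover.
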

\label{remark_con}
 \noindent  For $q<\infty$ and $d=1$, Lemma~\ref{Borell}  is
a consequence of \citet[2.2.4]{Kwapien_R_S}.
Furthermore, for  $q\in (1,\infty)$ and $d\geq 2$ it is  taken from the proof of
\citet[Theorem~6.6.2]{Kwapien_R_S} and  using
\citet[Remark~6.9.1]{Kwapien_R_S} the result is seen to hold 
  also  for $q\in(0,1]$. For $q=\infty$, Lemma~\ref{Borell} is a consequence of
\citet[Theorem~4.1]{Borell_Polynomial}.  In \citet{Borell_Polynomial} 
the result is only stated 
for $2\leq p<r$, however, a standard
application of H\"older's inequality shows that it is  valid
for all $0<p<r$; see e.g.\ \citet[Lemme~1.1]{Pisier_Borell}. Finally,
in \cite{Borell_Polynomial} there are no  explicit expression for
$A_d$; this can, however, be obtained by applying the next
Lemma~\ref{gen_arg} in the proof of   
\citet[Theorem~4.1]{Borell_Polynomial}. 
\begin{lemma}\label{gen_arg}
  Let $V$ denote a vector space, $N$ a 
  seminorm on $V$, $\epsilon\in (0,1)$ and $x_0,\dots,x_d\in V$.   
  \begin{align}
    \label{eq:15}
    \text{If } N\Big(\sum_{k=0}^d \lambda^k x_k\Big)\leq 1
    \text{ for all }\lambda\in
    [-\epsilon,\epsilon]\quad \text{then}\quad N\Big(\sum_{k=0}^d
    x_k\Big)\leq 2^{d^2/2+d}\epsilon^{-d}.\qquad 
  \end{align} 
  \end{lemma}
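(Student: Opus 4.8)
The plan is to treat $p(\lambda)=\sum_{k=0}^d\lambda^k x_k$ as a $V$-valued polynomial of degree at most $d$, to reconstruct it from its values at $d+1$ nodes lying inside $[-\epsilon,\epsilon]$ (where the hypothesis controls $N(p(\lambda))$), and then to extrapolate to $\lambda=1$. Concretely, fix $d+1$ distinct nodes $\lambda_0,\dots,\lambda_d\in[-\epsilon,\epsilon]$ and use the Lagrange identity $p(\lambda)=\sum_{j=0}^d \ell_j(\lambda)\,p(\lambda_j)$ with the scalar basis polynomials $\ell_j(\lambda)=\prod_{i\neq j}\frac{\lambda-\lambda_i}{\lambda_j-\lambda_i}$. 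This is an identity between $V$-valued polynomials of degree at most $d$ that agree at the $d+1$ points $\lambda_0,\dots,\lambda_d$, hence it holds everywhere: the difference is a degree-$\le d$ polynomial with $d+1$ roots, so all its $V$-valued coefficients vanish by invertibility of the Vandermonde system. Evaluating at $\lambda=1$ gives $\sum_{k=0}^d x_k=p(1)=\sum_{j=0}^d \ell_j(1)\,p(\lambda_j)$, and the seminorm axioms \eqref{eq:48} together with $N(p(\lambda_j))\le 1$ yield $N\big(\sum_{k=0}^d x_k\big)\le \sum_{j=0}^d |\ell_j(1)|$.

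It then remains to choose the nodes so that $\sum_j|\ell_j(1)|$ is small and to estimate this sum. For $d\ge 1$ I would take the equally spaced nodes $\lambda_j=-\epsilon+\tfrac{2\epsilon}{d}\,j$, $j=0,\dots,d$ (the case $d=0$ being immediate, since then $p\equiv x_0$ and $N(x_0)\le 1$). For these nodes $\lambda_j-\lambda_i=\tfrac{2\epsilon}{d}(j-i)$, so $\prod_{i\neq j}|\lambda_j-\lambda_i|=\big(\tfrac{2\epsilon}{d}\big)^{d}\,j!\,(d-j)!$, while each numerator factor satisfies $|1-\lambda_i|\le 1+\epsilon\le 2$, giving $\prod_{i\neq j}|1-\lambda_i|\le 2^{d}$. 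Hence $|\ell_j(1)|\le \frac{d^{d}}{\epsilon^{d}\,j!\,(d-j)!}$, and summing over $j$ using $\sum_{j=0}^d\binom{d}{j}=2^d$ produces $N\big(\sum_{k=0}^d x_k\big)\le \frac{(2d)^{d}}{d!\,\epsilon^{d}}$.

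Finally I would compare this with the claimed bound. Since $d^{d}/d!\le e^{d}$, the estimate above is at most $(2e)^{d}\epsilon^{-d}$, and $(2e)^d\le 2^{d^2/2+d}$ for every $d\ge 3$ (equivalently $e^d\le 2^{d^2/2}$, i.e.\ $d\ge 2/\ln 2$), while the small cases $d=0,1,2$ give the values $1,2,8$ against $2^{d^2/2+d}=1,2^{3/2},2^{4}$ and are checked directly. The only real work is this elementary bookkeeping for the interpolation coefficients; there is no analytic obstacle, as the seminorm enters solely through the triangle inequality and positive homogeneity $N(cv)=|c|N(v)$. I expect the mildly delicate point to be organising the constant cleanly. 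An alternative that keeps the combinatorics even more transparent is to extract $x_d$ via a $d$-th order finite difference with step $2\epsilon/d$ and then induct on $d$, which reproduces the same $\epsilon^{-d}$ scaling; but the one-shot Lagrange computation above seems shortest and already beats the stated constant.
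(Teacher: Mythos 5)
Your proof is correct, and it takes a genuinely different route from the paper's. The paper first establishes the scalar case by induction on $d$: after rescaling to $[-1,1]$ it invokes the P\'olya--Szeg\H{o} bound (Aufgabe~77) on the leading coefficient of a polynomial bounded by $1$ on $[-1,1]$ to get $\abs{x_d}\le 2^d\epsilon^{-d}$, subtracts the top term, applies the induction hypothesis, and then lifts the scalar inequality to a general seminorm by writing $N(x)=\sup_{F\in\Lambda}\abs{F(x)}$ via Hahn--Banach. Your one-shot Lagrange extrapolation instead works directly with the $V$-valued polynomial: the identity $p(1)=\sum_j\ell_j(1)p(\lambda_j)$ is legitimate in any real vector space (your Vandermonde argument, or equivalently applying scalar interpolation to the monomials $\lambda^k$, $0\le k\le d$), the seminorm enters only through finitely many uses of homogeneity and subadditivity, and the bookkeeping checks out: with equally spaced nodes, $\abs{\ell_j(1)}\le d^d/(\epsilon^d\,j!\,(d-j)!)$, the binomial sum gives $N\big(\sum_k x_k\big)\le (2d)^d/(d!\,\epsilon^d)\le (2e)^d\epsilon^{-d}$, which is bounded by $2^{d^2/2+d}\epsilon^{-d}$ for $d\ge 3$, and $d=0,1,2$ are verified directly ($1,2,8$ against $1,2^{3/2},2^4$). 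What your approach buys is that it avoids both Hahn--Banach and the external P\'olya--Szeg\H{o} reference, and it actually yields a strictly sharper, single-exponential constant $(2d)^d/d!$ in place of $2^{d^2/2+d}$; carried through, this would improve the explicit constant $A_d$ in Lemma~\ref{Borell} and the exponential-integrability threshold in Theorem~\ref{int_rade}. What the paper's inductive route buys is mainly economy of machinery of a different kind: it needs only evaluations on the whole interval rather than a specific node system, and the Hahn--Banach reduction to scalars is the standard device for such seminorm statements; but as a proof of the stated lemma, your argument is complete and, constant-wise, stronger.
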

\noindent The proof of Lemma~\ref{gen_arg} is postponed to Section~\ref{proofs}.

An $F$-valued  random
element $X$ is said to be a.s.\ separably valued if $P(X\in A)=1$ for
some separable closed subset $A$ of $F$. We  have the following result:
\begin{theorem}\label{int_rade}
  Let   $F$ denote  a metrizable l.c.TVS,  $X\in\wpd(F)$ an 
 a.s.\ separably valued random element and 
  $N$ a lower semicontinuous pseudo-seminorm on $F$
  such that $N(X)<\infty$ a.s. Assume that $\h$ satisfies $C_q$
  for some $q\in (0,\infty]$ and if $q<\infty$ and $d\geq 2$ that all elements 
  in  $\cup_{\xi\in I}\h_\xi$ are 
  symmetric.  Then for all finite $0<p<r\leq q$ we
  have 
    \begin{align}
      \label{eq:45}
      \norm{N(X)}_r\leq  k_{p,r,d,\beta}
     \norm{N(X)}_p<\infty,
    \end{align} where  $k_{p,r,d,\beta}$ depends only  on
  $p,q,d$ and the $\beta$'s from $C_q$. Furthermore,  in the case
  $q=\infty$ we have that  $E[e^{\epsilon 
      N(X)^{2/d}}]<\infty$ for all     $\epsilon< d/(e 2^{d+5}
    \beta_3^4 \norm{N(X)}_2^{2/d})$.  
\end{theorem}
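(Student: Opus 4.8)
The plan is to reduce $N$ to an increasing sequence of seminorms depending on only finitely many continuous linear functionals, transfer Lemma~\ref{Borell} to each through the weak chaos property, and then pass to the limit. Since $X$ is a.s.\ separably valued, I would first replace $F$ by the closed separable subspace $G=\ls A$, where $A$ is separable closed with $P(X\in A)=1$. As $N$ is lower semicontinuous, $U=\{x\in G:N(x)\le1\}$ is closed, convex and balanced, so the bipolar theorem gives $N(x)=\sup_{\ell\in U^\circ}\abs{\ell(x)}$ for a family $U^\circ\subseteq G^*$ of continuous functionals. Because the image $\{\ell(X):\ell\in U^\circ\}$ lies in the separable space $L^0(\mu)$, $\mu=P\circ X\inv$, I can choose a countable $(\ell_i)_{i\ge1}\subseteq U^\circ$ dense in probability and set $N_m:=\max_{i\le m}\abs{\ell_i(\,\cdot\,)}$, each factoring as $\tilde N_m\circ(\ell_1,\dots,\ell_m)$ for the sup-norm $\tilde N_m$ on $\R^m$. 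The main obstacle lies exactly here: density in probability only gives $\sup_i\abs{\ell_i(X)}=\operatorname*{esssup}_{\ell\in U^\circ}\abs{\ell(X)}$ a.s., and upgrading this essential supremum to the pointwise one, so that $N_m(X)\uparrow N(X)$ a.s., is delicate, since the uncountable union of the $P$-null sets $\{\abs{\ell(X)}>\sup_i\abs{\ell_i(X)}\}$ need not be null; I expect to close this using the lower semicontinuity of $N$ together with separability of $G$. In the concrete cases \eqref{eq:57} the decomposition $N=\sup_mN_m$ holds by construction, so there the difficulty is absent.

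Fix $m$ and abbreviate $C:=k_{p,r,d,\beta}$. By Definition~\ref{def_svag} there are $\R^m$-valued tetrahedral polynomials $X_k\cdist(\ell_1(X),\dots,\ell_m(X))$, and Lemma~\ref{Borell} in the Banach space $(\R^m,\tilde N_m)$ gives $\norm{\tilde N_m(X_k)}_r\le C\norm{\tilde N_m(X_k)}_p$ with a constant independent of $k$ and $m$ (this is where $C_q$, and symmetry of the generators when $d\ge2$ and $q<\infty$, enter). Writing $V_k=\tilde N_m(X_k)$ and $V=N_m(X)$, the continuous mapping theorem gives $V_k\cdist V$. Transferring the inequality to $V$ is the one remaining analytic point. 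From $\norm{V_k}_r\le C\norm{V_k}_p$ a Paley--Zygmund-type estimate (split $E[V_k^p]$ at level $\epsilon\norm{V_k}_p$ and apply H\"older) yields a uniform lower bound $P(V_k>\epsilon\norm{V_k}_p)\ge\gamma_\epsilon>0$ for each $\epsilon\in(0,1)$; if $\norm{V_k}_p$ were unbounded this would contradict the tightness of $(V_k)$ forced by $V_k\cdist V$ with $V<\infty$ a.s. Hence $\sup_k\norm{V_k}_p<\infty$, so $\sup_k\norm{V_k}_r<\infty$, the variables $V_k^p$ are uniformly integrable, $\norm{V_k}_p\to\norm{V}_p$, and $\norm{V}_r\le\liminf_k\norm{V_k}_r$; together these give $\norm{N_m(X)}_r\le C\norm{N_m(X)}_p<\infty$.

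Letting $m\to\infty$, monotone convergence turns $N_m(X)\uparrow N(X)$ into $\norm{N_m(X)}_s\uparrow\norm{N(X)}_s$ for $s\in\{p,r\}$, so the inequality survives; finiteness of $\norm{N(X)}_p$ follows from the same tightness/Paley--Zygmund argument applied to the increasing sequence $N_m(X)$, which gives $\sup_m\norm{N_m(X)}_p<\infty$. This proves $\norm{N(X)}_r\le k_{p,r,d,\beta}\norm{N(X)}_p<\infty$ for all finite $0<p<r\le q$.

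For $q=\infty$ I would feed the explicit constant of Lemma~\ref{Borell} into a Taylor expansion. Taking $p=2$ gives $\norm{N(X)}_r\le A_d\beta_3^{2d}r^{d/2}\norm{N(X)}_2$ for all $r\ge2$, with $A_d=2^{d^2/2+2d}$. Then
\[
E\big[e^{\epsilon N(X)^{2/d}}\big]=\sum_{n\ge0}\frac{\epsilon^n}{n!}\,E\big[N(X)^{2n/d}\big],
\]
and bounding the $n$-th moment through $r=2n/d$ and using $n^n/n!\le e^n$ shows the $n$-th term is at most $\big(\epsilon\,e\,\tfrac{2}{d}(A_d\beta_3^{2d})^{2/d}\norm{N(X)}_2^{2/d}\big)^n$. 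Since $(A_d)^{2/d}=2^{d+4}$, this geometric series converges exactly when $\epsilon<d/(e\,2^{d+5}\beta_3^4\norm{N(X)}_2^{2/d})$, the stated range.
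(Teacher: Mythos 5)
Your reduction of $N$ to a countable supremum is precisely the step you leave open, and it is a genuine gap rather than a routine detail: the bipolar representation $N(x)=\sup_{\ell\in U^\circ}\abs{\ell(x)}$ involves an uncountable family, and picking $(\ell_i)_{i\geq 1}$ dense in $\{\ell(X):\ell\in U^\circ\}$ in probability only yields, for each \emph{fixed} $\ell$, the inequality $\abs{\ell(X)}\leq\sup_i\abs{\ell_i(X)}$ outside a null set depending on $\ell$; as you note, these null sets are uncountably many, so $N_m(X)\uparrow N(X)$ a.s.\ does not follow, and without it neither the passage to the limit in $m$ nor the finiteness of $\norm{N(X)}_p$ is established. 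The repair is not measure-theoretic but purely topological, and it is exactly the paper's Lemma~\ref{Fernique_lemma}: since $X$ is a.s.\ separably valued one may take $F$ separable and metrizable, hence Lindel\"of; applying Hahn--Banach separation to the closed, convex, balanced set $A=\{x:N(x)\leq 1\}$ writes $A^c$ as a union of open sets $\{y:\abs{x^*(y)}>1\}$, and the Lindel\"of property extracts a countable subfamily $(x_n^*)$ with $A^c=\bigcup_n\{y:\abs{x_n^*(y)}>1\}$, whence $N=\sup_n\abs{x_n^*(\cdot)}$ \emph{pointwise on all of $F$}, so no null sets ever enter. Your remark that lower semicontinuity and separability should close the gap points in the right direction, but it is not a proof, and the route you sketch (density of functionals in $L^0$, i.e.\ an argument through the law of $X$) is not the one that works.

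Apart from this missing lemma, your argument is essentially the paper's. You apply Lemma~\ref{Borell} in $l^m_\infty$ to tetrahedral approximants and transfer the moment inequality along convergence in distribution; where the paper invokes \citet[Corollary~1.4]{Random_multi} for uniform integrability (Lemma~\ref{gen_exten}), you give a self-contained Paley--Zygmund argument, and it is correct: the uniform lower bound $P(V_k>\epsilon\norm{V_k}_p)\geq\gamma_\epsilon>0$ combined with tightness of $(V_k)$ bounds $\sup_k\norm{V_k}_p$, hence $\sup_k\norm{V_k}_r$, giving uniform integrability of $(V_k^p)$ and the limiting inequality. The monotone passage in $m$, the bound $\sup_m\norm{N_m(X)}_p<\infty$ by the same device, and the exponential-moment computation for $q=\infty$ (using $k_{2,r,d,\beta}=A_d\beta_3^{2d}r^{d/2}$, $A_d^{2/d}=2^{d+4}$, and $n^n/n!\leq e^n$, recovering the threshold $\epsilon<d/(e2^{d+5}\beta_3^4\norm{N(X)}_2^{2/d})$) all match the paper's proof. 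Once you replace your density-in-$L^0$ reduction by the Lindel\"of/Hahn--Banach representation of $N$ as a countable pointwise supremum, the proposal becomes a complete proof along the same lines as the paper's.
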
 
\noindent For  $q=\infty$, Theorem~\ref{int_rade} answers in the case
where the pseudo-seminorm is lower 
semicontinuous a question raised by
\citet{Borell_Walsh}  concerning integrability of pseudo-seminorms of
Rademacher chaos elements. This additional assumption is satisfied in most
examples, in particular in the examples in \eqref{eq:57}.  
Using the equivalence of norms in Theorem~\ref{int_rade} 
we have by \citet[Corollary~1.4]{Random_multi} the following corollary: 
\begin{corollary}\label{cor_main}
 Let $F$ and $\h$ be as in Theorem~\ref{int_rade} and  $N$ be a 
 continuous seminorm on $F$. Then given $(X_n)_{n\geq
   1}\subseteq  \wpd(F)$ all  a.s.\ 
  separably valued such that  $\lim_n X_n=0$ in
  probability we have $\norm{N(X_n)}_p\rightarrow  0$ for
  all finite $p\in (0,q]$. 
\end{corollary}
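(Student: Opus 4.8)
The plan is to prove Corollary~\ref{cor_main} by combining the equivalence of $L^p$-norms furnished by Theorem~\ref{int_rade} with a convergence-in-probability argument, leaning on the cited result \citet[Corollary~1.4]{Random_multi}. Let me sketch the structure.

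We're given a sequence $(X_n)$ of weak chaos elements of order $d$, all almost surely separably valued, with $X_n \to 0$ in probability. We want $\|N(X_n)\|_p \to 0$ for all finite $p \in (0,q]$. The key leverage is that Theorem~\ref{int_rade} gives uniform control: for each $X_n$ we have equivalence of norms with a constant $k_{p,r,d,\beta}$ that depends only on $p,r,d$ and the $\beta$'s — crucially NOT on $n$. So the plan is:

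First, since $N$ is continuous and $X_n \to 0$ in probability, I would argue $N(X_n) \to 0$ in probability as well. Continuity of $N$ means the map $x \mapsto N(x)$ is continuous, so convergence in probability of $X_n$ in the metrizable space $F$ should transfer to convergence in probability of the real random variables $N(X_n)$ (via the continuous mapping theorem for convergence in probability). Let me make sure this is clean in a metrizable l.c.TVS — it should be, as convergence in probability is metrizable and continuous maps preserve it.

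Second, convergence in probability of $N(X_n)$ to $0$ gives $\|N(X_n)\|_{p_0} \to 0$ for some small $p_0$ (say $p_0 < p$), provided we have uniform integrability, or more directly if we can show $L^{p_0}$-convergence. Here is where the main obstacle lies, and where I expect the cited \citet[Corollary~1.4]{Random_multi} to do the real work: converting convergence in probability into $L^{p_0}$-convergence typically requires uniform integrability of $\{N(X_n)^{p_0}\}$, and that uniform integrability should follow precisely from the uniform hypercontractive-type equivalence of norms — a bound $\|N(X_n)\|_r \le k \|N(X_n)\|_{p_0}$ with $n$-independent $k$ prevents mass from escaping to infinity. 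Then the chain is: $N(X_n) \to 0$ in probability, uniform integrability of a slightly higher power, hence $\|N(X_n)\|_{p_0} \to 0$. Finally, apply Theorem~\ref{int_rade} once more with $p_0 < p \le q$ to get $\|N(X_n)\|_p \le k_{p_0,p,d,\beta}\|N(X_n)\|_{p_0} \to 0$, with the constant independent of $n$.

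The hard part is the middle step — rigorously passing from convergence in probability to $L^{p_0}$-convergence — and the honest approach is to invoke \citet[Corollary~1.4]{Random_multi} directly, since the paper explicitly states the corollary follows ``by'' that reference. I expect that result is exactly a statement of the form ``for weak chaos elements, convergence in probability upgrades to $L^p$-convergence under an equivalence-of-norms hypothesis.'' So in the write-up I would state that Theorem~\ref{int_rade} supplies the required uniform equivalence of norms, that $N(X_n) \to 0$ in probability by continuity of $N$, and then cite \citet[Corollary~1.4]{Random_multi} to conclude $\|N(X_n)\|_p \to 0$ for every finite $p \in (0,q]$, being careful that the allowed range of $p$ matches the range in which Theorem~\ref{int_rade} guarantees finiteness and equivalence of the norms.
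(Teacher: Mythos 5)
Your argument is exactly the paper's intended route: the paper gives no written proof but derives the corollary directly from the $n$-independent equivalence of norms in Theorem~\ref{int_rade} together with \citet[Corollary~1.4]{Random_multi}, used (as elsewhere in the paper, e.g.\ in Lemma~\ref{gen_exten}) to turn that uniform norm equivalence plus convergence in probability into uniform integrability and hence $L^p$-convergence. Your expansion, including the continuity of $N$ giving $N(X_n)\to 0$ in probability and the final application of the uniform constant to cover all finite $p\in(0,q]$, fills in the details correctly and in the same spirit.
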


Theorem~\ref{int_rade} relies on  the following two lemmas together with
an application of Lemma~\ref{Borell} on the Banach space $l^n_\infty$,
that is $\R^n$ equipped with the sup norm.  First,
arguing  as in \citet[Lemme~1.2.2]{Fernique_book} we have:
\begin{lemma}\label{Fernique_lemma}
  Assume  $F$ is a strongly Lindel\"of l.c.TVS. Then a pseudo-seminorm
  $N$ on $F$ is  lower semicontinuous
 if and only if there exists
   $(x^*_n)_{n\geq 1}\subseteq F^*$ such that $N(x)=\sup_{n\geq 1}
\abs{x_n^*(x)}$ for all $x\in F$. 
\end{lemma}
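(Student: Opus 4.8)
The plan is to prove the two implications separately, with essentially all the content lying in the ``only if'' direction. For the easy ``if'' direction, suppose $N(x)=\sup_{n\geq 1}\abs{x_n^*(x)}$ with each $x_n^*\in F^*$ continuous; then each map $x\mapsto \abs{x_n^*(x)}$ is continuous, hence lower semicontinuous, and a pointwise supremum of lower semicontinuous functions is lower semicontinuous, so $N$ is lower semicontinuous.

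For the ``only if'' direction, assume $N$ is a lower semicontinuous pseudo-seminorm. The first key step is a Hahn--Banach representation. Set $\mathcal{D}=\{x^*\in F^*:\abs{x^*(x)}\leq N(x)\text{ for all }x\in F\}$; I claim $N(x)=\sup_{x^*\in\mathcal{D}}\abs{x^*(x)}$ for all $x$. The inequality $\geq$ is immediate since every $x^*\in\mathcal{D}$ is dominated by $N$. For $\leq$, fix $x_0$ and $t$ with $0\leq t<N(x_0)$; rescaling $x_0$ I may assume $N(x_0)>1$, so $x_0\notin B:=\{x:N(x)\leq 1\}$. By lower semicontinuity $B$ is closed, and by the homogeneity and triangle inequality in \eqref{eq:48} it is convex and balanced. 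Since $F$ is locally convex, the separation theorem yields $x^*\in F^*$ with $x^*(x_0)>\sup_{y\in B}x^*(y)=:M\geq 0$ (balancedness gives $M=\sup_{y\in B}\abs{x^*(y)}$, and $0\in B$ gives $M\geq 0$). After normalizing by $M$, the homogeneity relation in \eqref{eq:48} promotes the bound $\abs{x^*}\leq 1$ on $B$ to $\abs{x^*}\leq N$ on all of $F$, so $x^*\in\mathcal{D}$ while $\abs{x^*(x_0)}>1$. This establishes the representation.

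The second key step reduces the possibly uncountable family $\mathcal{D}$ to a countable one using the strong Lindel\"of hypothesis. For each rational $q>0$ the set $U_q:=\{x:N(x)>q\}$ is open by lower semicontinuity, and by the representation just proved $U_q=\bigcup_{x^*\in\mathcal{D}}\{x:\abs{x^*(x)}>q\}$, an open cover of $U_q$. Since $F$ is strongly Lindel\"of, the open subspace $U_q$ is Lindel\"of, so this cover admits a countable subcover indexed by $(x^*_{q,n})_{n\geq 1}\subseteq\mathcal{D}$. Enumerating $\{x^*_{q,n}:q\in\Q,\ q>0,\ n\geq 1\}$ as a single sequence $(x_m^*)_{m\geq 1}\subseteq F^*$, I would verify $N(x)=\sup_m\abs{x_m^*(x)}$: the bound $\leq$ holds because each $x_m^*\in\mathcal{D}$, while for $\geq$, given any $s<N(x)$ one picks rational $q\in(s,N(x))$, observes $x\in U_q$, and uses the subcover to find $m$ with $\abs{x_m^*(x)}>q>s$.

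The main obstacle is precisely the passage from the uncountable indexing set $\mathcal{D}$ to a countable sequence: this is where the strong (hereditarily) Lindel\"of property is indispensable, as it guarantees that each open level set $U_q$, viewed as a subspace, inherits the Lindel\"of property and hence admits the countable subcover. A secondary technical point requiring care is the bookkeeping for pseudo-seminorms attaining the value $+\infty$, both in the separation argument (the degenerate case $M=0$, which forces $N(x_0)=\infty$ and is handled by rescaling $x^*$) and in promoting $\abs{x^*}\leq 1$ on $B$ to $\abs{x^*}\leq N$ via homogeneity on the subspace $\{N<\infty\}$.
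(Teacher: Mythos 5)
Your proof is correct and follows essentially the same route as the paper: Hahn--Banach separation from the closed, convex, balanced set $\{x: N(x)\leq 1\}$, followed by the strongly Lindel\"of property to extract a countable family of functionals. The only (immaterial) difference is that the paper takes a countable subcover of the single open set $\{N>1\}$ and recovers $N$ from its unit ball by homogeneity, whereas you first form the full dominated family $\mathcal{D}$ and take subcovers of every rational level set $\{N>q\}$.
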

\begin{proof}
  The  \textit{if}-implication is trivial. To show the \textit{only
    if}-implication  let $A:=\{x\in F:N(x)\leq 1\}$. Then $A$ is convex and  balanced since
  $N$ is a pseudo-seminorm and closed since $N$ is lower
  semicontinuous. Thus by the Hahn-Banach theorem,
  see \citet[Theorem~3.7]{Rudin}, 
  for all $x\notin A$ there exists $x^*\in F^*$ such that
  $\abs{x^*(y)}\leq 1$ for all $y\in A$ and $x^*(y)>1$, 
   showing that 
  \begin{align}
    \label{eq:72}
    A^c=\bigcup_{x\in A^c} \{y\in F:\abs{x^*(y)}>1\}.
  \end{align} Since $F$ is strongly Lindel\"of, there exists 
  $(x_n)_{n\geq 1}\subseteq A^c$   such that
  \begin{align}
    \label{eq:79}
    A^c=\bigcup_{n=1}^\infty \{y\in F:\abs{x^*_n(y)}>1\},
  \end{align} implying that 
  $A=\{y\in F:\sup_{n\geq 1}\abs{x^*_n(y)}\leq 1\}$.
  Thus by homogeneity we have  $N(y)=\sup_{n\geq 
    1} \abs{x^*_n(y)}$ for all $y\in F$.  
\end{proof}
\begin{lemma}\label{gen_exten}
  Let $n\geq 1,\ 0<p<q$  and $C>0$ be given such that 
\begin{align}
  \label{eq:168}
\lVert X\rVert_{L^q(P;l^n_\infty)}\leq C\lVert
X\rVert_{L^p(P;l^n_\infty)}<\infty,\qquad   X\in \ph_{\h_\xi}^d,\ \xi\in I.
\end{align} Then, for all $(X_1,\dots,X_n)\in \pd(\R^n)$ we have that 
\begin{align}
  \label{eq:169}
  \norm{\max_{1\leq k\leq n}\abs{X_k}}_q\leq C\norm{\max_{1\leq k\leq n}\abs{X_k}}_p<\infty.
\end{align}
\end{lemma}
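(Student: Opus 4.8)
The plan is to reduce \eqref{eq:169} to a statement about convergence in distribution of nonnegative real random variables and then to push the uniform $L^q$/$L^p$ inequality \eqref{eq:168} through to the limit. By definition of $\pd(\R^n)$ there are indices $(\xi_m)_{m\ge 1}\subseteq I$ and polynomials $X^{(m)}\in\ph_{\h_{\xi_m}}^d(\R^n)$ with $X^{(m)}\cdist(X_1,\dots,X_n)$. Set $W_m:=\max_{1\le k\le n}\abs{X^{(m)}_k}$ and $W:=\max_{1\le k\le n}\abs{X_k}$, so that $W$ is nonnegative and a.s.\ finite. Since $y\mapsto\max_k\abs{y_k}$ is continuous on $\R^n$, the continuous mapping theorem gives $W_m\cdist W$; and, the sup norm on $\R^n$ being precisely the $l^n_\infty$ norm, hypothesis \eqref{eq:168} applies to each $X^{(m)}$ and yields $\norm{W_m}_q\le C\norm{W_m}_p<\infty$ for all $m$. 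It therefore suffices to prove the following purely scalar statement: if $W_m\cdist W$ are nonnegative with $\norm{W_m}_q\le C\norm{W_m}_p<\infty$ and $q>p$, then $\norm{W}_q\le C\norm{W}_p<\infty$.

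I would first record the lower semicontinuity of moments under convergence in distribution: for every $r>0$, since $w\mapsto w^r$ is continuous and nonnegative, the portmanteau theorem (equivalently, Skorokhod's representation together with Fatou's lemma) gives $\norm{W}_r\le\liminf_m\norm{W_m}_r$. Taken with $r=q$ this points in exactly the direction needed to bound $\norm{W}_q$ from above, but taken with $r=p$ it points the wrong way. Hence the substance of the argument is to show that $\ell:=\liminf_m\norm{W_m}_p$ is finite and in fact equals $\norm{W}_p$, so that the one-sided estimate for $\norm{W}_q$ can be combined with the uniform bound.

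The key step, and the main obstacle, is to exploit the strict gap $q>p$ in \eqref{eq:168} via uniform integrability. First I would rule out $\ell=\infty$: if $\norm{W_m}_p\to\infty$, put $V_m:=W_m/\norm{W_m}_p$, so $\norm{V_m}_p=1$ and $\norm{V_m}_q\le C$; tightness of $(W_m)$ (it converges in distribution) together with $\norm{W_m}_p\to\infty$ forces $V_m\to 0$ in probability, while $\sup_m E[(V_m^p)^{q/p}]=\sup_m\norm{V_m}_q^q\le C^q<\infty$ makes $\{V_m^p\}$ uniformly integrable by de la Vall\'ee-Poussin (here $q/p>1$), whence $E[V_m^p]\to 0$, contradicting $E[V_m^p]=1$. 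Thus $\ell<\infty$. Passing to a subsequence (not relabelled) along which $\norm{W_m}_p\to\ell$, the bound $\norm{W_m}_q\le C\norm{W_m}_p$ gives $\sup_m\norm{W_m}_q<\infty$ along it, so $\{W_m^p\}$ is uniformly integrable; combined with $W_m\cdist W$ this upgrades lower semicontinuity to genuine convergence $\norm{W_m}_p\to\norm{W}_p$, so $\norm{W}_p=\ell<\infty$. Finally, along the same subsequence $\limsup_m\norm{W_m}_q\le C\lim_m\norm{W_m}_p=C\ell$, and lower semicontinuity yields $\norm{W}_q\le\liminf_m\norm{W_m}_q\le C\ell=C\norm{W}_p$, which is \eqref{eq:169}. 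The crux throughout is that the single uniform constant $C$ with $q>p$ simultaneously prevents the $p$-norms from escaping to infinity and supplies the uniform integrability that converts the merely one-sided lower semicontinuity into the convergence of norms that the argument requires.
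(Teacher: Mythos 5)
Your proof is correct and follows essentially the same route as the paper: pass to approximating tetrahedral polynomials, observe that the $l^n_\infty$-norms converge in distribution, use lower semicontinuity (Fatou/portmanteau) for the $L^q$-norm, and use uniform integrability of the $p$-th powers to turn the $\liminf$ of the $L^p$-norms into $\norm{\max_k\abs{X_k}}_p$. The only difference is that you establish the uniform integrability step by hand (ruling out $\norm{W_m}_p\to\infty$ by normalization and then invoking de la Vall\'ee-Poussin via the uniform bound \eqref{eq:168}), whereas the paper obtains it directly from boundedness in $L^0$ together with \eqref{eq:168} by citing \citet[Corollary~1.4]{Random_multi}.
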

\begin{proof}
Let $X\in \pd(\R^n)$ and choose $(\xi_k)_{k\geq 1}\subseteq I$ and 
$X_{k}\in \ph_{\h_{\xi_k}}^d(\R^n)$ for
 $k\geq 1$ such that $X_k\cdist X$. Moreover, let $U_k=\norm{X_k}_{l_\infty^n}$ 
 and $U=\norm{X}_{l_\infty^n}$. Then, 
 $U_k\cdist U$  showing  that  $(U_k)_{k\geq 1}$ is bounded in $L^0$, 
and by  \eqref{eq:168} and \citet[Corollary~1.4]{Random_multi}, $\{U_k^p:k\geq 1\}$ is
uniformly integrable. This shows that     
  \begin{align}
    \label{eq:172}
    \norm{U}_q\leq \liminf_{k\rightarrow\infty} \norm{U_k}_q\leq
    C\liminf_{k\rightarrow\infty} \norm{U_k}_p=C\norm{U}_p<\infty,
  \end{align} and the proof is complete. 
\end{proof}

\begin{proof}[Proof of Theorem~\ref{int_rade}]
Since $X$ is a.s.\ separably valued we may and will assume that
$F$ is separable. Hence according to 
Lemma~\ref{Fernique_lemma} there exists  $(x_n^*)_{n\geq
   1}\subseteq F^*$ such that $N(x)=\sup_{n\geq 1} \abs{x^*_n(x)}$
for all $x\in F$. For   $n\geq 1$,  let 
 $X_n:=x^*_n(X)$ and 
 $U_n=\sup_{1\leq k\leq n}\abs{X_{k}}$.
 Then $(U_n)_{n\geq 1}$ converges almost surely to
 $N(X)$. For finite  $0<p<r\leq q$  let 
 $C=k_{p,r,d,\beta}$.  Combining  Lemmas~\ref{Borell}  and
 \ref{gen_exten} show
 $ \norm{U_n}_q \leq C\norm{U_n}_p<\infty$ for all $n\geq 1$.
   This implies that  $\{U_n^p:n\geq
 1\}$ is  uniformly integrable and hence we have that
 \begin{align}
   \label{eq:184}
   \norm{N(X)}_r\leq \liminf_{n\rightarrow\infty} \norm{U_n}_r \leq
   C\liminf_{n\rightarrow\infty} \norm{U_n}_p  =C\norm{N(X)}_p<\infty.
 \end{align} Finally, the exponential
  integrability under $C_\infty$ follows by the last part of Lemma~\ref{Borell} since
  \begin{align}
    \label{eq:3}
    E[e^{\epsilon N(X)^{2/d}}]
        \leq 1+\sum_{k=1}^d
    \norm{N(X)}_{2k/d}^{2k/d}+\sum_{k=d+1}^\infty \big(\epsilon 2^{d+5}\beta_3^4
    \norm{N(X)}_2^{2/d}/d\big)^k \frac{k^k}{k!}.
  \end{align} This completes the proof.  
\end{proof}

 Let  $T$ denote a  countable set and  $F=\R^T$ equipped with
 the product topology. $F$ is then a separable and locally convex \fre\
 space and all $x^*\in F^*$
 are of the form  $x\mapsto \sum_{i=1}^n \alpha_i x(t_i)$, for some
 $n\geq 1,\ t_1,\dots ,t_n\in T$ and 
 $\alpha_1,\dots,\alpha_n\in \R$. Thus for $X=(X_t)_{t\in T}$ we have
 that  $X\in\wpd(F)$
 if and only if $X$ is a weak chaos process of order $d$. Rewriting
 Theorem~\ref{int_rade} in the case $F=\R^T$ we obtain 
 the following result:
\begin{theorem}\label{cor_stoc_proc}
 Assume $\h$ satisfies $C_q$
  for some $q\in (0,\infty]$ and if $q<\infty$ and $d\geq 2$ that all elements in 
  $\cup_{\xi\in I}\h_\xi$ are  symmetric. Let $T$ denote a countable set, $\xT$ a weak
  chaos process of order $d$ and  $N$ a lower semicontinuous pseudo-seminorm on $\R^T$
   such that $N(X)<\infty$ a.s.\
  Then  for   all  finite $0<p<r\leq q$ we have
  \begin{align}
    \label{eq:148}
    \norm{N(X)}_r\leq  k_{p,r,d,\beta}
     \norm{N(X)}_p<\infty,
  \end{align}
  and in the case $q=\infty$ that $E[e^{\epsilon
      N(X)^{2/d}}]<\infty$   for all $\epsilon<d /(e 2^{d+5} \beta_3^4
    \norm{N(X)}_2^{2/d})$.  
\end{theorem}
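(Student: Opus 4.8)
The plan is to deduce Theorem~\ref{cor_stoc_proc} directly from Theorem~\ref{int_rade} by specializing to the concrete locally convex space $F=\R^T$. The entire content of the reduction is the observation, made in the paragraph immediately preceding the statement, that weak chaos processes of order $d$ are precisely the elements of $\wpd(F)$ when $F=\R^T$ carries the product topology. So the first step is to verify that $F=\R^T$ meets all the hypotheses demanded by Theorem~\ref{int_rade}: since $T$ is countable, $\R^T$ with the product topology is a separable, metrizable, locally convex Hausdorff topological vector space (it is a countable product of copies of $\R$, hence metrizable by the standard product metric and separable because $\Q^T$-style finitely-supported rational sequences are dense). Being separable, any $F$-valued random element is automatically a.s.\ separably valued, so that technical hypothesis of Theorem~\ref{int_rade} is free here.

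Next I would pin down the dual space. Every continuous linear functional $x^*$ on $\R^T$ with the product topology depends on only finitely many coordinates, i.e.\ is of the form $x\mapsto \sum_{i=1}^n \alpha_i x(t_i)$ for some $n\geq 1$, $t_1,\dots,t_n\in T$ and $\alpha_1,\dots,\alpha_n\in\R$; this is exactly the description recalled in the excerpt. Given this, the defining condition of $\wpd(F)$, namely that $(x_1^*(X),\dots,x_n^*(X))\in\pd(\R^n)$ for every finite family $(x_i^*)_{i=1}^n\subseteq F^*$, unwinds into the statement that every finite linear combination of the coordinates $X_t$ forms, jointly, an element of $\pd$. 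I would then note that for a weak chaos process the raw finite-dimensional marginals $(X_{t_1},\dots,X_{t_n})$ lie in $\pd(\R^n)$ by Definition~\ref{def_svag}, and that applying the linear maps $x_i^*$ (which are fixed deterministic linear combinations) preserves membership in $\pd$, since composing a tetrahedral polynomial of order $d$ with a linear map again yields a tetrahedral polynomial of order $d$ and convergence in distribution is preserved under continuous maps. This establishes the claimed equivalence $X\in\wpd(\R^T)\iff X$ is a weak chaos process of order $d$.

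With this dictionary in place, the remaining hypotheses match up verbatim: $N$ is a lower semicontinuous pseudo-seminorm on $\R^T$ with $N(X)<\infty$ a.s., $\h$ satisfies $C_q$, and the symmetry assumption on $\cup_{\xi\in I}\h_\xi$ when $q<\infty$ and $d\geq 2$ is carried over unchanged. Theorem~\ref{int_rade} then applies directly to $X$ viewed as the $\R^T$-valued random element, yielding both the $L^p$-norm equivalence \eqref{eq:148} with the very same constant $k_{p,r,d,\beta}$ and, in the case $q=\infty$, the exponential integrability $E[e^{\epsilon N(X)^{2/d}}]<\infty$ for the stated range of $\epsilon$. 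Thus the proof is essentially a one-line invocation once the specialization is justified.

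I do not expect any genuine obstacle here, since this is a corollary-style restatement rather than a new argument; the only point requiring care is the verification that $\R^T$ is strongly Lindel\"of (or at least satisfies whatever hypothesis Lemma~\ref{Fernique_lemma} needs), because Theorem~\ref{int_rade} invokes that lemma internally to realize $N$ as a countable supremum of functionals. Since $\R^T$ for countable $T$ is a separable metrizable space it is Lindel\"of, and the lower semicontinuous representation $N(x)=\sup_{n\geq1}\abs{x_n^*(x)}$ is available; this is the single place where the countability of $T$ and separability of $\R^T$ are really used, and it is already subsumed in the hypotheses of Theorem~\ref{int_rade}. Hence the corollary follows with no additional work beyond the identification of the function space and its dual.
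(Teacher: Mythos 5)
Your proposal is correct and matches the paper's own argument: the paper proves Theorem~\ref{cor_stoc_proc} precisely by specializing Theorem~\ref{int_rade} to $F=\R^T$ with the product topology, using that $\R^T$ is a separable Fr\'echet space whose dual consists of finite linear combinations of coordinate evaluations, so that $X\in weak\text{-}\pd(\R^T)$ if and only if $X$ is a weak chaos process of order $d$. Your additional checks (automatic a.s.\ separable-valuedness, closure of $\pd(\R^n)$ under fixed linear maps, and the strongly Lindel\"of point needed for Lemma~\ref{Fernique_lemma}) are exactly the details the paper leaves implicit.
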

\noindent For example, let $T=[0,1]\cap \Q$,
$(X_t)_{t\in T}$ be of the form
$X_t=\int_0^1 f(t,s)\,dY_s$ where $Y$ is a symmetric
normal inverse Gaussian L\'evy process, and 
$\morf{N}{\R^T}{[0,\infty]}$ be given by \eqref{eq:57}.    
 Then, $N$ is a lower semicontinuous pseudo-seminorm and $X$ is weak chaos
 process of order one satisfying $C_q$ for all $q<1$ according to
 Proposition~\ref{pro_levy}. Thus, if 
  $N(X)<\infty$ a.s.\ then   $E[N(X)^p]<\infty$ for all $p<1$,
  according to Theorem~\ref{cor_stoc_proc}. 
  
Let $\G$ denote a vector space of Gaussian random variables  
and  $\pid(\R)$ be the closure in
probability of the random variables $p(Z_{1},\dots,
Z_{n})$,  where $n\geq 1$,  $Z_1,\dots,Z_n\in \G$
and $\morf{p}{\R^n}{\R}$ is a polynomial of degree at most $d$ (not
necessary tetrahedral). 
\begin{lemma}\label{lem_gau}
  Let $F$ be a l.c.TVS and $X$ an $F$-valued random element such that 
$x^*(X)\in \pid(\R)$ for all $x^*\in F^*$; then $X\in
weak\text{-}\pd(F)$ where  $\h=\{\h_0\}$ and $\h_0$ is a Rademacher sequence. 
\end{lemma}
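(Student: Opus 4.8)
The plan is to verify the defining property of $\wpd(F)$ directly. Fix $n\geq 1$ and $x_1^*,\dots,x_n^*\in F^*$, put $Y_i:=x_i^*(X)$ and $Y:=(Y_1,\dots,Y_n)$, and show $Y\in\pd(\R^n)$ for the Rademacher family $\h=\{\h_0\}$; since $n$ and the functionals are arbitrary, this is exactly $X\in\wpd(F)$. First I would produce a joint Gaussian approximation of $Y$. By hypothesis each $Y_i$ lies in $\pid(\R)$, so there are polynomials $p_i^{(k)}$ of degree at most $d$ in finitely many elements of $\G$ with $p_i^{(k)}\to Y_i$ in probability as $k\to\infty$. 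Collecting, for each fixed $k$, all the Gaussian variables occurring in the coordinates into a single finite tuple $W^{(k)}=(W_1,\dots,W_r)\subseteq\G$, and recalling that a linear space of Gaussian variables is jointly Gaussian (every linear combination is a univariate Gaussian), I may write $p_i^{(k)}=P_i^{(k)}(W^{(k)})$ with $\deg P_i^{(k)}\le d$. Since coordinatewise convergence in probability is equivalent to convergence in probability of the vector, the $\R^n$-valued Gaussian polynomials $P^{(k)}(W^{(k)}):=(P_1^{(k)},\dots,P_n^{(k)})(W^{(k)})$, of degree at most $d$, converge to $Y$ in probability, hence in distribution.

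The heart of the argument is to realise each such Gaussian polynomial as a distributional limit of tetrahedral Rademacher polynomials. Since $W^{(k)}$ is a Gaussian vector, I can write it as an affine image $W^{(k)}=\mu+A(g_1,\dots,g_s)$ of i.i.d.\ standard Gaussians $g_1,\dots,g_s$; substituting, $P^{(k)}(W^{(k)})=Q^{(k)}(g_1,\dots,g_s)$ for an $\R^n$-valued polynomial $Q^{(k)}$ of degree at most $d$. Now replace each $g_l$ by the normalised block sum $S_l^{(m)}:=m^{-1/2}\sum_{j=1}^m\epsilon_{l,j}$, using pairwise disjoint blocks of the Rademacher sequence $\h_0$. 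By the multivariate central limit theorem $(S_1^{(m)},\dots,S_s^{(m)})\cdist(g_1,\dots,g_s)$, and since $Q^{(k)}$ is continuous, the continuous mapping theorem yields $Q^{(k)}(S^{(m)})\cdist Q^{(k)}(g)=P^{(k)}(W^{(k)})$ as $m\to\infty$. The crucial point is that $Q^{(k)}(S^{(m)})$, although a priori a polynomial of degree $\le d$ in the variables $\epsilon_{l,j}$, reduces --- using $\epsilon_{l,j}^2=1$ --- to a squarefree polynomial in distinct Rademacher variables of degree no larger than $d$, i.e.\ to an $\R^n$-valued tetrahedral polynomial of order $d$; hence $Q^{(k)}(S^{(m)})\in\ph^d_{\h_0}(\R^n)$ and therefore $P^{(k)}(W^{(k)})\in\pd(\R^n)$.

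To finish I would diagonalise. Weak convergence on $\R^n$ is metrizable, say by the L\'evy--Prokhorov metric $\rho$, so for each $k$ I can pick $m_k$ with $\rho\big(\mathrm{law}(Q^{(k)}(S^{(m_k)})),\mathrm{law}(P^{(k)}(W^{(k)}))\big)<1/k$. Combined with $P^{(k)}(W^{(k)})\cdist Y$ from the first paragraph, the triangle inequality gives $Q^{(k)}(S^{(m_k)})\cdist Y$ with $Q^{(k)}(S^{(m_k)})\in\ph^d_{\h_0}(\R^n)$, so $Y\in\pd(\R^n)$ by the very definition of $\pd$. As $n$ and $x_1^*,\dots,x_n^*$ were arbitrary, $X\in\wpd(F)$ for the Rademacher family.

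The main obstacle --- and the only genuinely non-routine point --- is the tetrahedral reduction in the second paragraph: one must check that collapsing repeated factors via $\epsilon^2=1$ produces an honestly multilinear (squarefree) polynomial whose degree does not exceed $d$, which is exactly what makes tetrahedral polynomials suffice and legitimises the membership $Q^{(k)}(S^{(m)})\in\ph^d_{\h_0}(\R^n)$. The remaining ingredients (joint Gaussianity of finite subsets of $\G$, coordinatewise versus joint convergence in probability, the multivariate central limit theorem and continuous mapping theorem, and the metrizability diagonalisation) are standard.
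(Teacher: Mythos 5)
Your argument is correct and follows essentially the same route as the paper: approximate $(x_1^*(X),\dots,x_n^*(X))$ in probability by $\R^n$-valued Gaussian polynomials of degree at most $d$, realise the underlying Gaussian vector as a limit of normalised Rademacher block sums via the central limit theorem, use $\epsilon^2=1$ to reduce to tetrahedral polynomials in distinct elements of $\h_0$, and conclude by a diagonal argument. The only difference is cosmetic: you make explicit the affine reduction to i.i.d.\ standard Gaussians and the L\'evy--Prokhorov diagonalisation, which the paper absorbs into its choice of $p_k(Y_k)$ and its ``it suffices to show'' step.
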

\noindent Recall that a sequence of independent, identically
distributed random variables 
$(Z_n)_{n\geq 1}$ such that $P(Z_1=\pm 1)=1/2$ is called a Rademacher sequence.
\begin{proof}
  Let $n\geq 1,\  x^*_1,\dots,x^*_n\in F^*$ and 
$W=(x^*_1(X),\dots,x_n^*(X))$. 
We need to show that $W\in \pd(\R^n)$. 
For all $k\geq 1$ we may choose polynomials 
$\morf{p_{k}}{\R^k}{\R^n}$ of degree at most $d$
and $Y_{1,k},\dots, Y_{k,k}$  
 independent  standard normal random variables such that with $
Y_k=(Y_{1,k},\dots, Y_{k,k})$ we have 
$\lim_k  p_k( Y_k)=W$ in probability.  Hence it  suffices to show
$p_k( Y_k)\in \pd(\R^n)$    for all $k\geq 1$. Fix $k\geq 1$ and let us  write $p$ and $Y$ 
for $p_k$ and $Y_k$. Reenumerate
 $\h_0$ as $k$ independent Rademacher sequences $(Z_{i,m})_{i\geq 1}$
 $m=1,\dots,k$  and set
\begin{equation}
  \label{eq:83}
  U_{j}=\frac{1}{\sqrt{j}}\sum_{i=1}^j (Z_{1,i},\dots,Z_{k,i}),\qquad
  j\geq 1.
\end{equation}
Then, by the central limit
theorem $ U_j\cdist  Y$ and hence 
$ p(U_j)\cdist  p( Y)$. Due to the fact that  all $Z_{i,m}$ only 
takes on the values $\pm 1$, 
$p(U_j)\in \ph_{\h_0}^d(\R^n)$ for all $j\geq 1$, showing  that $p( Y)\in \pd(\R^n)$. 
\end{proof}

The $\h$ in Lemma~\ref{lem_gau} trivially satisfies $C_\infty$ with
$\beta_3=1$ and hence a combination of 
Theorem~\ref{int_rade} and Lemma~\ref{lem_gau} shows:

\begin{proposition}\label{cor_gau}
Let $F$ be a l.c.TVS and $X$ an a.s.\ separably valued random element
in $F$ such that $x^*(X)\in \pid(\R)$ for all $x^*\in F^*$. 
Then, for all   lower 
semicontinuous pseudo-seminorms $N$  on $F$ satisfying
$N(X)<\infty$ a.s.\  we have 
    \begin{equation}
      \label{eq:152}
   \norm{N(X)}_r\leq    2^{d^2/2+d} 
\left(\frac{r-1}{p-1}\right)^{d/2}\norm{N(X)}_p<\infty,
    \end{equation} and 
    $E[e^{\epsilon N(X)^{2/d}}]<\infty$ for all $\epsilon<d /(e 2^{d+5} 
    \norm{N(X)}_2^{2/d})$.
\end{proposition}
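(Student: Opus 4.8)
The plan is to read off the qualitative conclusions from the results already established and to recover the sharp Gaussian constant by a separate hypercontractivity argument. First I would apply Lemma~\ref{lem_gau}: since $x^*(X)\in\pid(\R)$ for every $x^*\in F^*$, the element $X$ lies in $\wpd(F)$ for $\h=\{\h_0\}$ with $\h_0$ a Rademacher sequence, and this $\h$ satisfies $C_\infty$ with $\beta_3=1$. Theorem~\ref{int_rade} then gives at once that $\norm{N(X)}_r<\infty$ for every finite $r$ together with the exponential integrability $E[e^{\epsilon N(X)^{2/d}}]<\infty$ for $\epsilon<d/(e2^{d+5}\norm{N(X)}_2^{2/d})$, which is exactly the $\beta_3=1$ specialization. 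The equivalence constant produced by Theorem~\ref{int_rade} in the Rademacher case is however $2^{d^2/2+2d}r^{d/2}$, so the sharper factor $2^{d^2/2+d}(\frac{r-1}{p-1})^{d/2}$ must be extracted from the Gaussian structure, and for this I take $1<p<r<\infty$.

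For the sharp norm equivalence I would repeat the reduction from the proof of Theorem~\ref{int_rade}. Since $X$ is a.s.\ separably valued we may assume $F$ separable, so Lemma~\ref{Fernique_lemma} provides $(x_n^*)_{n\geq1}\subseteq F^*$ with $N(x)=\sup_{n\geq1}\abs{x_n^*(x)}$; put $N_n(x)=\sup_{m\leq n}\abs{x_m^*(x)}$ and $U_n=N_n(X)$, so $U_n\uparrow N(X)$ almost surely. Exactly as in the proof of Lemma~\ref{lem_gau}, the vector $W_n=(x_1^*(X),\dots,x_n^*(X))$ belongs to $\pid(\R^n)$, i.e.\ $W_n=\lim_k p^{(k)}(Y_k)$ in probability for $\R^n$-valued polynomials $p^{(k)}$ of degree at most $d$ in independent standard Gaussian vectors $Y_k$.

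The core estimate is carried out for a fixed such Gaussian polynomial $p(Y)$. Decomposing it into its Wiener-chaos components $p(Y)=\sum_{j=0}^d P_j$, where $P_j$ projects onto the $j$-th homogeneous chaos, the Ornstein--Uhlenbeck semigroup acts by $T_\lambda p(Y)=\sum_{j=0}^d\lambda^j P_j$ and admits the representation $T_\lambda p(Y)=E[p(\lambda Y+\sqrt{1-\lambda^2}\,Y')\mid Y]$ with $Y'$ an independent copy of $Y$. As $N_n$ is a seminorm, Jensen's inequality gives $N_n(T_\lambda p(Y))\leq(T_\lambda\psi)(Y)$ with $\psi(y)=N_n(p(y))$, so the scalar Nelson hypercontractivity $\norm{T_\lambda\psi}_{L^r}\leq\norm{\psi}_{L^p}$ yields
\[
\norm{N_n(T_\lambda p(Y))}_r\leq\norm{N_n(p(Y))}_p,\qquad \abs{\lambda}\leq\rho:=\sqrt{\tfrac{p-1}{r-1}}.
\]
I would then apply Lemma~\ref{gen_arg} on the normed space $L^r(P;l^n_\infty)$, with seminorm $\Xi\mapsto\norm{N_n(\Xi)}_r$, vectors $x_j=P_j$ and $\epsilon=\rho\in(0,1)$: the display is precisely the hypothesis of Lemma~\ref{gen_arg} after normalizing by $\norm{N_n(p(Y))}_p$, and evaluating the conclusion at $\lambda=1$ (where $\sum_j P_j=p(Y)$) gives $\norm{N_n(p(Y))}_r\leq 2^{d^2/2+d}\rho^{-d}\norm{N_n(p(Y))}_p=2^{d^2/2+d}(\frac{r-1}{p-1})^{d/2}\norm{N_n(p(Y))}_p$.

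It remains to pass to the limit. Since $p^{(k)}(Y_k)\to W_n$ in probability we have $N_n(p^{(k)}(Y_k))\to U_n$ in probability, and $\{N_n(p^{(k)}(Y_k))^p\}_k$ is uniformly integrable by \citet[Corollary~1.4]{Random_multi} exactly as in the proof of Lemma~\ref{gen_exten}; hence the constant transfers to $\norm{U_n}_r\leq 2^{d^2/2+d}(\frac{r-1}{p-1})^{d/2}\norm{U_n}_p$, and letting $n\to\infty$ with $\{U_n^p\}$ uniformly integrable (from the finiteness already shown) gives the asserted bound on $\norm{N(X)}_r$. I expect the main obstacle to be the core estimate: vector-valued hypercontractivity is not available directly, so the crux is the Jensen reduction to the scalar Nelson inequality together with packaging the chaos decomposition so that Lemma~\ref{gen_arg} applies verbatim in $L^r(P;l^n_\infty)$; the two limiting steps are the same uniform-integrability arguments already used for Lemma~\ref{gen_exten} and Theorem~\ref{int_rade}.
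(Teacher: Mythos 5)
Your proposal is correct, and it reaches the conclusion by a partly different route than the paper. For the qualitative part (finiteness of all moments and the exponential bound with $\beta_3=1$) you do exactly what the paper does: its entire proof is the one-line observation that Lemma~\ref{lem_gau} makes $X$ a weak Rademacher chaos element, the Rademacher family satisfies $C_\infty$ with $\beta_3=1$, and Theorem~\ref{int_rade} applies. Where you diverge is the sharp constant in \eqref{eq:152}: the paper obtains it along the same Rademacher route, the factor $2^{d^2/2+d}\left((r-1)/(p-1)\right)^{d/2}$ being what falls out of Borell's Theorem~4.1 once Lemma~\ref{gen_arg} is inserted into its proof with the Bonami hypercontractive parameter $\sqrt{(p-1)/(r-1)}$ (this is what the remark after Lemma~\ref{Borell} alludes to; the displayed choice $2^{d^2/2+2d}\beta^{2d}r^{d/2}$ there is only the $p\geq 2$ simplification, so your observation that Theorem~\ref{int_rade} as stated does not literally deliver the constant is well taken). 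You instead rederive the constant directly on the Gaussian side: the Mehler representation and conditional Jensen reduce the vector-valued bound to scalar Nelson hypercontractivity, and then Lemma~\ref{gen_arg} applied to the norm $\Xi\mapsto\norm{\,\norm{\Xi}_{l^n_\infty}}_{L^r}$ with the Wiener-chaos components as the $x_j$ and $\epsilon=\sqrt{(p-1)/(r-1)}$ gives precisely $2^{d^2/2+d}\epsilon^{-d}$, after which your two limit passages are the same uniform-integrability arguments as in Lemma~\ref{gen_exten} and Theorem~\ref{int_rade}. Both derivations are of the form ``hypercontractivity plus Lemma~\ref{gen_arg}''; yours buys a self-contained and explicit source for the constant, bypasses the CLT/Rademacher approximation of Lemma~\ref{lem_gau} for this step, and handles the non-tetrahedral Gaussian polynomials of $\pid(\R)$ natively, at the price of invoking Nelson's inequality (external to the paper) and of restricting to $1<p<r<\infty$ --- a restriction that is anyway implicit in the statement, since the constant is meaningless for $p\leq 1$.
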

\noindent The integrability of $e^{\epsilon  N(X)^{2/d}}$ for some $\epsilon>0$
is a consequence of the seminal work \citet[Theorem~4.1]{Borell}.  
However, the above provides a very simple proof of this result and 
gives also   equivalence of  $L^p$-norms and explicit constants. When
$F=\R^T$ for some countable set $T$, Proposition~\ref{cor_gau} covers
processes $X=(X_t)_{t\in T}$,  where all time variables have the 
following representation in terms of multiple
Wiener-It\^o integrals with respect to a Brownian 
motion $W$,
\begin{equation}
  \label{eq:102}
  X_t=\sum_{k=0}^d \int_{\R^k_+} f(t,k;s_1,\dots,s_k)\,dW_{s_1}\cdots
  dW_{s_k},\qquad t\in T.
\end{equation} 

The next result is known from \citet[Theorem~3.1]{Arcones} for general
Gaussian polynomials. 
\begin{proposition}\label{pro_series}
Assume that $\h=\{\h_0\}$ satisfies $C_q$ for
some $q\in [2,\infty]$ and $\h_0$ consists of symmetric random
variables. Let $F$ denote a Banach space 
and  $X$ an a.s.\ separably valued random element in $F$
with $x^*(X)\in \pd(\R)$ for all $x^*\in
F^*$. Then there
exists $x_0,x_{i_1,\dots,i_k}\in F$ and 
$\{Z_n:n\geq 1\}\subseteq \h_0$ such that for all finite $p\leq q$ 
  \begin{equation}
    \label{eq:71}
    X=\lim_{n\rightarrow\infty}\Big(x_0+\sum_{k=1}^d 
    \sum_{1\leq i_1<\dots<i_k\leq n} x_{i_1,\dots,i_k}
  \prod_{j=1}^k Z_{{i_j}}\Big)\qquad \text{a.s.\ and in }L^p(P;F). 
  \end{equation} 
\end{proposition}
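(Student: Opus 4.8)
The plan is to realise $X$ as the sum of its ``Fourier series'' with respect to the orthogonal system of tetrahedral monomials in a suitably chosen sequence $(Z_n)_{n\ge1}\subseteq\h_0$, and then to promote the resulting convergence by means of the equivalence of $L^p$-norms in Lemma~\ref{Borell}. First I would dispose of the standing reductions. Since $X$ is a.s.\ separably valued we may assume $F$ separable, and, choosing a countable norming family $(x^*_n)_{n\ge1}\subseteq F^*$ so that $\norm{\,\cdot\,}_F=\sup_n\abs{x^*_n(\,\cdot\,)}$, we may apply Theorem~\ref{int_rade} to the continuous pseudo-seminorm $N=\norm{\,\cdot\,}_F$ to conclude $X\in L^p(P;F)$ for every finite $p\le q$; this uses $X\in\wpd(F)$, which I would extract from the hypothesis by a Cram\'er--Wold type argument, noting that every linear combination $\sum_i a_i x^*_i(X)=\big(\sum_i a_i x^*_i\big)(X)$ again lies in $\pd(\R)$. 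Since $\h$ satisfies $C_q$ with $q\ge2$, Remark~\ref{eqi_norms} gives $\cup_{\xi\in I}\h_\xi\subseteq L^2$, and symmetry forces $E[Z]=0$; discarding the degenerate variables, the monomials $M_\alpha:=\prod_{i\in\alpha}Z_i$ over finite index sets $\alpha$ with $\abs{\alpha}\le d$ are pairwise orthogonal in $L^2(P)$ with $\norm{M_\alpha}_2^2=\prod_{i\in\alpha}\norm{Z_i}_2^2>0$.

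With this in hand I would define the coefficients by Bochner integration, $x_0:=E[X]$ and $x_\alpha:=E[X M_\alpha]/\norm{M_\alpha}_2^2\in F$ (finite since $X\in L^2(P;F)$ and $M_\alpha\in L^2$), and set $S_n:=x_0+\sum_{k=1}^d\sum_{1\le i_1<\dots<i_k\le n}x_{i_1,\dots,i_k}\prod_{j=1}^k Z_{i_j}$, which is precisely the right-hand side of \eqref{eq:71}. Because each $Z_i$ is centered and independent of $Z_1,\dots,Z_{i-1}$, a direct computation gives $E[S_{n+1}\mid\sigma(Z_1,\dots,Z_n)]=S_n$, so $(S_n)_{n\ge1}$ is an $F$-valued martingale. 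Moreover, for every $x^*\in F^*$ the scalar variable $x^*(S_n)$ equals the $L^2$-orthogonal projection of $x^*(X)$ onto the span of $\{M_\alpha:\alpha\subseteq\{1,\dots,n\}\}$, since $x^*$ commutes with the Bochner integrals defining $x_\alpha$.

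The main obstacle is the convergence $S_n\to X$. Coordinatewise this reduces to showing that each $x^*(X)$ genuinely lies in the $L^2$-closure $W$ of the tetrahedral polynomials in $(Z_n)_{n\ge1}$ --- not merely that its law is a limit of such polynomials --- so that the projections $x^*(S_n)$ converge to $x^*(X)$ in $L^2$; this is exactly where the hypothesis $x^*(X)\in\pd(\R)$ must be combined with the uniform integrability furnished by \citet[Corollary~1.4]{Random_multi} to turn distributional approximation into genuine $L^2$-approximation on the given space. Granting this, since $(S_n)$ is a martingale of tetrahedral polynomials bounded in $L^p(P;F)$ by Lemma~\ref{Borell}, the coordinatewise limits identify the martingale limit as $X$, and the norm equivalence promotes $L^2$-convergence to convergence in $L^p(P;F)$ for all finite $p\le q$.

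The almost sure convergence is the most delicate point, as it does not follow from Banach-valued martingale convergence alone. I would derive it from an It\^o--Nisio/maximal-inequality argument for $L^p$-convergent tetrahedral chaos series, again leaning on the norm equivalences and on \citet{Random_multi}: the countably many scalar martingales $\big(x^*_n(S_m)\big)_m$ converge a.s.\ by Doob's theorem, the maximal function $\sup_m\norm{S_m}_F$ is controlled in $L^p$ by Doob's inequality and Lemma~\ref{Borell}, and together with the established $L^p(P;F)$-convergence to $X$ these combine to give $S_n\to X$ a.s.\ in $F$.
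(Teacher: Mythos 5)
Your architecture (Bochner-integral Fourier coefficients with respect to the orthogonal tetrahedral monomials, then a Banach-valued martingale argument) is genuinely different from the paper's, but it leaves the central difficulty of the proposition unresolved: it is exactly the step you label ``Granting this''. The hypothesis $x^*(X)\in\pd(\R)$ is purely distributional, so you must (i) produce one countable subfamily $\{Z_n\}\subseteq\h_0$ that works simultaneously for all $x^*$ (recall $\h_0$ may be uncountable), and (ii) show that each $x^*(X)$ itself lies in the $L^2$-closed span of the tetrahedral monomials in that subfamily, not merely that its law is a limit of laws of such polynomials. The uniform integrability supplied by \citet[Corollary~1.4]{Random_multi} cannot do this: it upgrades convergence in probability to $L^p$-convergence, but it neither converts convergence in law into convergence on the given probability space nor identifies the limit with $x^*(X)$. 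Without (i) and (ii), your coefficients $x_\alpha=E[XM_\alpha]/\norm{M_\alpha}_2^2$ only produce the projection of $X$ onto the chaos over whatever sequence you picked, and $S_n$ has no reason to converge to $X$. The paper resolves precisely this point by different machinery: since $F$ may be taken separable, $F^*_1$ is weak*-compact and metrizable (Banach--Alaoglu); Corollary~\ref{cor_main}, applied to the scalar variables $(x^*_k-x^*)(X)$ which lie in $\pd(\R)$ by hypothesis, makes $x^*\mapsto x^*(X)$ weak*-to-$L^2$ continuous, so $\{x^*(X):x^*\in F^*_1\}$ is $L^2$-compact, hence separable, which is what yields the countable family $\{Z_n\}$ and the $L^2$-expansions; the coefficient maps $x^*\mapsto a(A,x^*)$ are then linear and weak*-continuous, hence of the form $x^*(x_A)$ with $x_A\in F$; a.s.\ convergence in $F$ comes from \citet[Theorem~6.6.1]{Kwapien_R_S}, and $L^p(P;F)$-convergence again from Corollary~\ref{cor_main}. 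None of this appears in your sketch.

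A secondary gap: your Cram\'er--Wold reduction to $X\in\wpd(F)$ is not valid as stated. Knowing that each combination $\big(\sum_i a_ix_i^*\big)(X)$ lies in $\pd(\R)$ gives, for every choice of $a$, a different approximating sequence of scalar polynomials; it does not produce a single sequence of $\R^n$-valued tetrahedral polynomials converging jointly in law to $(x_1^*(X),\dots,x_n^*(X))$. Hence your appeal to Theorem~\ref{int_rade} to get $X\in L^2(P;F)$ --- which you need even to define the Bochner coefficients --- is unsupported; the paper deliberately works with one functional at a time to avoid exactly this issue. By contrast, your final step is essentially sound modulo the above: once $S_n\to X$ in $L^p(P;F)$ is known one has $S_n=E[X\mid\sigma(Z_1,\dots,Z_n)]$ and a.s.\ convergence follows from closable vector-valued martingale convergence, whereas the paper instead invokes the It\^o--Nisio-type theorem of Kwapie\'n and Woyczy\'nski.
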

\begin{proof} We follow \citet[Lemma~3.4]{Arcones}. Since $X$ is a.s.\
  separably valued 
  we may and do assume  $F$ is separable, which implies that  $F^*_1:=\{x^*\in
  F^*:\norm{x^*}\leq 1\}$ is
metrizable and compact in the weak*-topology by the Banach-Alaoglu theorem; see
\citet[Theorem~3.15+3.16]{Rudin}. 
  Moreover, the map $x^*\mapsto x^*(X)$ from
$F^*_1$ into $L^0$ is trivially weak*-continuous and
thus  a weak*-continuous map into $L^2$ by
Corollary~\ref{cor_main}. This  shows that   
$\{x^*(X):x^*\in F^*_1\}$ is compact in $L^2$ and hence separable. By definition
of $\pd(\R)$,  this implies that there exists a countable set
$\{Z_n:n\geq 1\}\subseteq \h_0$ such that 
\begin{align}
  \label{eq:183}
  x^*(X)=\sum_{A\in N_d} a(A,x^*)Z_A,\qquad \text{in }L^2, 
\end{align} for some $a(A,x^*)\in \R$, where   $N_d=\{A\subseteq \N:\abs{A}\leq
d\}$ and $Z_A=\prod_{i\in A}Z_{i}$ for $A\in N_d$. For  $A\in
N_d$, the map  $x^*\mapsto a(A,x^*)$ from 
$F^*$ into $\R$ is linear and 
weak*-continuous and hence there exists $x_A\in F$ such that
$a(A,x^*)=x^*(x_A)$, showing that  
\begin{align}
  \label{eq:186}
  x^*(X)=\lim_{n\rightarrow\infty} x^*\Big( \sum_{A\in N_d^n} x_A
  Z_A\Big),\qquad \text{in }L^2, 
\end{align} where   $N_d^n=\{A\in  N_d:A\subseteq \{1,\dots,n\}\}$.
Since $F$ is separable, \eqref{eq:186} and 
\citet[Theorem~6.6.1]{Kwapien_R_S}  show that 
\begin{align}
  \label{eq:4}
  \lim_{n\rightarrow\infty}\sum_{A\in N_d^n} x_A Z_A= X\qquad
  \text{a.s.} 
\end{align} By Corollary~\ref{cor_main} the convergence
also takes place in $L^p(P;F)$ for all finite $p\leq q$, which
completes the proof. 
\end{proof}
The above proposition gives rise  to the following corollary: 
\begin{corollary}\label{cor_series}
  Assume that $\h=\{\h_0\}$ satisfies $C_q$ for
some $q\in [2,\infty]$ and $\h_0$ consists of symmetric random
variables. Let $T$ denote a set, $V(T)\subseteq \R^T$ a
separable Banach space where the map
$f\mapsto f(t)$ from $V(T)$ into $\R$ is continuous for all $t\in
T$, and $X=(X_t)_{t\in T}$ a  stochastic process with sample paths in $V(T)$
satisfying  $X_t\in\pd(\R)$ for all $t\in T$. Then there exists 
$x_0,x_{i_1,\dots,i_k}\in V(T)$ and 
$\{Z_n:n\geq 1\}\subseteq \h_0$ such that
  \begin{equation}
    \label{eq:31}
    X=\lim_{n\rightarrow\infty}\Big(x_0+\sum_{k=1}^d\sum_{1\leq
      i_1<\dots<i_k\leq n} x_{i_1,\dots,i_k} 
  \prod_{j=1}^k Z_{{i_j}}\Big)
  \end{equation}a.s.\ in $V(T)$ and in $L^p(P;V(T))$ for all finite
  $p\leq q$. 
\end{corollary}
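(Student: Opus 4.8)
The goal is to deduce Corollary~\ref{cor_series} from Proposition~\ref{pro_series} by identifying the abstract Banach space $F$ with the concrete path space $V(T)$. The plan is to apply Proposition~\ref{pro_series} with $F=V(T)$ and then translate the abstract almost-sure and $L^p$ convergence in $F$ into the desired statement about the process $X=(X_t)_{t\in T}$. First I would check that the hypotheses of Proposition~\ref{pro_series} are met: $V(T)$ is a separable Banach space by assumption, $\h=\{\h_0\}$ satisfies $C_q$ for $q\in[2,\infty]$ with $\h_0$ symmetric, and $X$ is an $F$-valued random element (its sample paths lie in $V(T)$, so viewing $X$ as the map $\omega\mapsto(X_t(\omega))_{t\in T}\in V(T)$ gives a $V(T)$-valued random element, which I should note is a.s.\ separably valued since $V(T)$ is separable).

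**Verifying $x^*(X)\in\pd(\R)$.**
The one genuine point requiring argument is the chaos hypothesis of Proposition~\ref{pro_series}, namely that $x^*(X)\in\pd(\R)$ for \emph{every} $x^*\in V(T)^*$, whereas Corollary~\ref{cor_series} only assumes $X_t\in\pd(\R)$ for each fixed $t\in T$, i.e.\ for the coordinate evaluations. The bridge is the continuity assumption: the evaluation maps $f\mapsto f(t)$ belong to $V(T)^*$, and for a general $x^*\in V(T)^*$ I would approximate $x^*(X)$ by finite linear combinations of these evaluations. Concretely, I expect that on the separable space $V(T)$ the coordinate functionals $\{\mathrm{ev}_t:t\in T\}$ separate points and hence their linear span is weak*-dense in $V(T)^*$; then any $x^*(X)$ is an $L^2$-limit (using the equivalence of norms / uniform integrability from Corollary~\ref{cor_main}, so that convergence in probability upgrades to $L^2$) of random variables of the form $\sum_i\alpha_i X_{t_i}$. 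Since each such finite combination lies in $\pd(\R)$ — this is immediate because $\pd(\R)$ is preserved under the linear map $(y_1,\dots,y_n)\mapsto\sum\alpha_i y_i$ applied to $(X_{t_1},\dots,X_{t_n})\in\pd(\R^n)$, which holds as $X$ is a weak chaos process — and $\pd(\R)$ is closed under convergence in distribution (hence in $L^2$), I conclude $x^*(X)\in\pd(\R)$.

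**Transferring the conclusion.**
Granting the hypotheses, Proposition~\ref{pro_series} yields $x_0,x_{i_1,\dots,i_k}\in V(T)$ and $\{Z_n:n\geq 1\}\subseteq\h_0$ such that the partial sums converge to $X$ both a.s.\ and in $L^p(P;V(T))$ for all finite $p\leq q$. This is verbatim the conclusion \eqref{eq:31} of the corollary, with convergence taking place in the norm of $V(T)$, so no further work is needed beyond restating it.

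**Main obstacle.**
The main obstacle is the step upgrading the pointwise (per-coordinate) chaos assumption $X_t\in\pd(\R)$ to the functional-level assumption $x^*(X)\in\pd(\R)$ for all $x^*\in V(T)^*$. The delicate parts are establishing weak*-density of $\mathrm{span}\{\mathrm{ev}_t:t\in T\}$ in $V(T)^*$ (equivalently, that these functionals separate points of $V(T)$, which follows from $V(T)\subseteq\R^T$ together with continuity of each $\mathrm{ev}_t$) and ensuring the approximating combinations converge to $x^*(X)$ in a mode strong enough — namely in distribution or $L^2$ — to keep the limit inside the closed class $\pd(\R)$; here Corollary~\ref{cor_main} is what guarantees the $L^2$-continuity needed to pass convergence in probability to convergence in distribution of the relevant functionals.
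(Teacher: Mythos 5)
Your overall route is the same as the paper's: apply Proposition~\ref{pro_series} with $F=V(T)$, using the evaluation functionals $\delta_t\in V(T)^*$ and weak*-density of their linear span to upgrade the coordinatewise hypothesis to $x^*(X)\in\pd(\R)$ for all $x^*\in V(T)^*$. There is, however, a genuine gap at the central step: you justify $\sum_i\alpha_i X_{t_i}\in\pd(\R)$ by asserting $(X_{t_1},\dots,X_{t_n})\in\pd(\R^n)$ ``as $X$ is a weak chaos process'', but the corollary does not assume that $X$ is a weak chaos process --- it only assumes the one-dimensional statement $X_t\in\pd(\R)$ for each fixed $t$. The implication is not automatic, because $\pd(\R)$ is defined through limits in distribution: from tetrahedral polynomials $P_k\cdist X_{t_1}$ and $Q_k\cdist X_{t_2}$ one gets no joint convergence of $(P_k,Q_k)$, so one cannot conclude $P_k+Q_k\cdist X_{t_1}+X_{t_2}$; stability of the class under linear combinations of coordinates is precisely the nontrivial point. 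It can be repaired under the standing assumptions ($C_q$ with $q\geq 2$ and symmetry): apply Proposition~\ref{pro_series} with $F=\R$ (or the results of \citet{Kwapien_R_S} behind it) to each $X_{t_i}$ to obtain an $L^2$-convergent tetrahedral expansion in elements of $\h_0$, add these expansions, and use that an $L^2$-limit of tetrahedral polynomials lies in $\pd(\R)$. The paper's own proof is laconic at this point, but it does not import the weak-chaos hypothesis as you do.

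Two smaller points. First, you pass over measurability: a priori $X$ is only measurable with respect to the cylindrical $\sigma$-field, and to regard it as a Borel-measurable $V(T)$-valued random element one needs that the Borel and cylindrical $\sigma$-fields on $V(T)$ coincide; the paper obtains this, together with the needed \emph{sequential} weak*-density of $\Spn\{\delta_t:t\in T\}$ (sequences, not nets, are what give you a.s.\ convergence of $\sum_i\alpha_i X_{t_i}$ to $x^*(X)$), from separability of $V(T)$ via \citet[page~287]{Rosinski}; your argument only asserts plain weak*-density. Second, your upgrade to $L^2$-convergence via Corollary~\ref{cor_main} is unnecessary and, as phrased, circular: to apply it to the differences $x^*(X)-\sum_i\alpha_i X_{t_i}$ you would already need these to lie in the chaos class, which is what you are trying to prove. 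It is also not needed: sequential weak*-convergence gives a.s., hence distributional, convergence, and $\pd(\R)$ is closed under convergence in distribution (a diagonal argument on the approximating polynomials), which is all the closure required --- and is what the paper uses implicitly.
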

  \begin{proof}
    For $t\in T$, let $\morf{\delta_t}{V(T)}{\R}$ denote the map
    $f\mapsto f(t)$. Since $V(T)$ is a separable Banach space and 
    $\{\delta_t:t\in T\}\subseteq V(T)^*$ 
    separate points in $V(T)$ we have
    \begin{enumerate}[(i)]
    \item \label{cor_1}the Borel $\sigma$-field  on
 $V(T)$ equals the cylindrical $\sigma$-field
 $\sigma(\delta_t:t\in T)$,
\item \label{cor_2} $\{\sum_{i=1}^n\alpha_i\delta_{t_i}:\alpha_i\in
  \R,\ t_i\in T,\ n\geq 1\}$  is 
sequentially weak*-dense in $V(T)^*$,
    \end{enumerate} see e.g.\ \citet[page~287]{Rosinski}. 
 By \eqref{cor_1} we may  regard $X$ as a random element
in $V(T)$ and  by \eqref{cor_2}  it follows that   $x^*(X)\in \pd(\R)$ for all
$x^*\in V(T)^*$. Hence the result is a consequence of
Proposition~\ref{pro_series}.  
  \end{proof}
\noindent \citet[Theorem~5.1]{Borell_Polynomial}
shows Corollary~\ref{cor_series} assuming \eqref{eq:74},  $T$ is a compact metric
  space, $V(T)=C(T)$ and $X\in L^q(P;V(T))$.
By assuming  $C_q$ instead of the weaker
condition \eqref{eq:74}  we can  omit the assumption $X\in L^q(P;V(T))$. 
Note also that   by Theorem~\ref{cor_stoc_proc}  the last assumption is
satisfied under $C_q$. When $\h_0$ consists of symmetric $\alpha$-stable
random variables and $d=1$, Corollary~\ref{cor_series} is known from
\citet[Corollary~5.2]{Rosinski}. The separability assumption on
$V(T)$ in Corollary~\ref{cor_series} is crucial. Indeed, for all
$p>1$, \citet[Proposition~4.5]{Jain_Monrad_B_p} construct a separable
 centered Gaussian process $X=(X_t)_{t\in [0,1]}$ with sample paths in
 the non-separable Banach space 
 $B_p$ of functions of finite $p$-variation on $[0,1]$  such that  the range of $X$ is a 
non-separable subset of $B_p$ and hence the conclusion in
Corollary~\ref{cor_series} can not be true. However, for the
non-separable Banach space $B_1$ a result similar to
Corollary~\ref{cor_series} is shown in \cite{Gau_Qua} for Gaussian
processes,  and extended to weak chaos processes in
\cite{Basse_Graversen}. 

%%%%%%%%%%%%%%%%%%%%%%%%%%%%%%%%%%%%%%%%%%%%%%%%%%%5
\section{Proofs of Proposition~\ref{pro_levy} and Lemma~\ref{gen_arg}}
%%%%%%%%%%%%%%%%%%%%%%%%%%%%%%%%%%%%%%%%%%%%%%%%%%%%%%%%%%%%%%%%
\label{proofs}
Let us start by proving Proposition~\ref{pro_levy}.
\begin{proof}[Proof of Proposition~\ref{pro_levy}]
 Assume that $\Lambda$ is a random measure induced by a L\'evy process
 $Y=(Y_t)_{t\in [0,T]}$. For arbitrary $A\in \s$ let  $Z=\Lambda(A)$. 

To prove the \textit{if}-implication of  \eqref{IG} let $q\in
 (0,\frac{1}{2})$ and assume that $Y_1\dist \text{IG}(\mu,\lambda)$. Then
 $Z\dist\text{IG}( m(A)\mu, m(A)^2\lambda)$, 
 where $m$ is the Lebesgue measure, and hence 
 with  $c_Z= m(A)^2\lambda $ we have that 
$Z/c_Z\dist \text{IG}(\mu/(\lambda m(A)),1)$, which has 
a density which on $[1,\infty)$ is  bounded from below and above by constants (not
depending on $x$) times $g_Z(x)$, where 
\begin{equation}
  \label{eq:55}
 \morf{g_Z}{\R_+}{\R_+},\qquad x\mapsto x^{-3/2}\exp[-x  (\lambda m(A))^2 /(2\mu^2)].
\end{equation} 
 Thus there exists a constant
$c>0$, not depending on $A$ or $s$,   such that 
\begin{align}
  \label{eq:91}
  \frac{E[\abs{Z/c_Z}^q,\abs{Z/c_Z}>s]}{s^q P(\abs{Z/c_Z}>s)} \leq c
  \sup_{u>0}\left(\frac{\int_u^\infty x^{q-3/2} e^{-x}\,dx
  }{u^q \int_u^\infty x^{-3/2} e^{-x}\,dx  } \right) \qquad
s\geq 1. 
\end{align} Using e.g.\ l'H\^opital's rule it is easily seen 
that \eqref{eq:91} is finite, showing \eqref{eq:139}. 
Therefore $C_q$ follows by the inequality 
\begin{equation}
  \label{eq:156}
  P( Z/c_Z\geq 1)\geq \frac{e^{-1/2}}{\sqrt{2\pi}}\int_1^\infty 
  x^{-3/2}\exp[-x(\lambda T)^2 /(2\mu^2)]\,dx.
\end{equation}  To show the
\textit{only if}-implication of \eqref{IG} note that $n^{2} Y_{1/n} \cdist X$ as
$n\rightarrow \infty$, where $X$
follows a $\frac{1}{2}$-stable distribution on $\R_+$. 
Assume that  $\h$ satisfies $C_q$ for some $q\geq
1/2$. Then, by Remark~\ref{eqi_norms} there exists $c>0$ such that
$\norm{Y_t}_{1/2}\leq c \norm{Y_t}_{1/4}$ for all $t\in [0,1]$, and since
$\{n^{2} Y_{1/n}:n\geq 1\}$ is bounded in $L^0$ it
is also bounded in $L^{1/2}$. But this contradicts 
\begin{equation}
  \label{eq:154}
  \infty=\norm{X}_{1/2}\leq \liminf_{n\rightarrow \infty} \norm{n^2 Y_{1/n}}_{1/2},
\end{equation} and shows that $\h$ does not satisfy $C_q$.

To show the \textit{if}-implication of \eqref{NIG} assume that $Y_1\dist
\text{NIG}(\alpha,0,0,\delta)$. Then, 
$Z\dist\text{NIG}(\alpha,0,0,m(A)\delta)$ and  
with  $c_Z=m(A)\delta$ we have that $Z/c_Z\dist \epsilon U^{1/2}_Z$, 
where $U_Z$ and $\epsilon$ are
independent, $U_Z\dist \text{IG}(1/(m(A) \delta\alpha),1 )$
and $\epsilon \dist \text{N}(0,1)$. 
For $q\in (0,1)$, 
\begin{align}
  \label{eq:98}
  E[\abs{Z/c_Z}^q,\abs{Z/c_Z}>s]=
 \sqrt{2\pi\inv} \Big( {}& \int_0^s
    E[\abs{x U^{1/2}_Z}^q,\abs{x U^{1/2}_Z}>s]e^{-x^2/2}\,dx \\ {}& +
\int_s^\infty
    E[\abs{x U^{1/2}_Z}^q,\abs{x U^{1/2}_Z}>s]e^{-x^2/2}\,dx \Big).
\end{align} Using  the above \eqref{IG} on $U_Z$ and $q/2$,  there exists a constant
$c_1>0$ such that 
\begin{align}
  \label{eq:99}
  \int_0^s  E[\abs{x U^{1/2}_Z}^q,\abs{x U^{1/2}_Z}>s]e^{-x^2/2}\,dx
\leq {}& c_1 s^q
\int_0^s P\big(U_Z>(s/x)^2 \big) e^{-x^2/2}\,dx\\ \leq  c_1 s^q
\int_0^\infty  P\big(x U^{1/2}_Z>s\big)
e^{-x^2/2}\,dx= {}& c_1 \sqrt{\pi 2\inv} s^q  P(\abs{Z/c_Z}>s).
\end{align} Furthermore, it well known that 
there exists a constant  $c_2>0$ such that for all $s\geq 1$ 
\begin{align}
  \label{eq:100}
{}&   \int_s^\infty
    E[\abs{x U^{1/2}_Z}^q,\abs{x U^{1/2}_Z}>s]e^{-x^2/2}\,dx 
    \\  \leq {}&   E[U_Z^{q/2}] \int_s^\infty x^q 
  e^{-x^2/2}\,dx \leq c_2 s^q E[U_Z^{q/2}] \int_s^\infty 
  e^{-x^2/2}\,dx. 
    \end{align} 
    Since $U_Z$ has a density given by \eqref{eq:146} it is easily
    seen that
    \begin{equation}
      \label{eq:155}
      E[U_Z^{q/2}]\leq 1+\frac{1}{\sqrt{2\pi}}\int_1^\infty
    x^{q/2-3/2}\,dx. 
    \end{equation}
Moreover, using that $Z/c_Z\dist \text{NIG}( m(A)\alpha \delta, 0,0,1)$ and
that $K_1(z)\geq e^{-z}/z$ for all $z>0$, 
it is not difficult to show that there 
    exists a constant $c_3$, not depending 
    on $s$ and  $A$, such that
    \begin{align}
      \label{eq:145}
      \int_s^\infty e^{-x^2/2}\,dx\leq c_3 P(\abs{Z/c_Z}>s),\qquad \text{for
        all } s\geq 1.
    \end{align} By combining the above we  obtain \eqref{eq:139}  
   and by \eqref{eq:145} applied  on $s=1$, $C_q$ follows.  The \textit{only
      if}-implication of \eqref{NIG} follows similar to the one of \eqref{IG}, now
    using that $(n^{-1}Y_{1/n})_{n\geq 1}$ converge weakly
    to a symmetric $1$-stable distribution. 
    \eqref{not_4} is a consequence of  the next lemma.
  \end{proof} 
  
The following lemma is concerned with the dynamics of the first and
second moments of \levy\
processes, and it has  Proposition~\ref{pro_levy}~\eqref{not_4} as 
a direct consequence.
\begin{lemma}
  Let $Y$ denote a non-deterministic and square-integrable \levy\ 
  process with no Gaussian
  component.  Then
  $\norm{Y_t}_1=o(t^{1/2})$ and $\norm{Y_t}_2\sim t^{1/2}
  \sqrt{E[(Y_1-E[Y_1])^2]}$ as $t\rightarrow 0$. 
\end{lemma}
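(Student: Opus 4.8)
The plan is to treat the two assertions separately; the second is essentially a restatement of the standard second-moment formula for \levy\ processes, while the first is where the no-Gaussian-component hypothesis does its work. For $\norm{Y_t}_2\sim t^{1/2}\sqrt{E[(Y_1-E[Y_1])^2]}$ I would simply use that for a square-integrable \levy\ process the mean and variance are linear in time, $E[Y_t]=t\,E[Y_1]$ and $\var(Y_t)=t\,\var(Y_1)$, so that
\[
  \norm{Y_t}_2^2=\var(Y_t)+E[Y_t]^2=t\,\var(Y_1)+t^2\,E[Y_1]^2.
\]
Since $Y$ is non-deterministic, $\var(Y_1)=E[(Y_1-E[Y_1])^2]>0$, and dividing by $t\,\var(Y_1)$ gives $\norm{Y_t}_2^2/(t\,\var(Y_1))=1+t\,E[Y_1]^2/\var(Y_1)\to1$ as $t\to0$, which is the claim. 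This part uses neither the absence of a Gaussian component nor any finer structure.

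For $\norm{Y_t}_1=o(t^{1/2})$ I would first reduce to the centered case: since $\norm{Y_t}_1\le\norm{Y_t-E[Y_t]}_1+\abs{E[Y_t]}$ and $\abs{E[Y_t]}=t\abs{E[Y_1]}=o(t^{1/2})$, it suffices to treat $M_t:=Y_t-E[Y_t]$, a centered square-integrable \levy\ martingale with no Gaussian component and \levy\ measure $\nu$ satisfying $\int x^2\,\nu(dx)=\var(Y_1)<\infty$. Fixing a threshold $\epsilon>0$, I would split $M$ via the \levy--It\^o decomposition into its independent compensated small- and large-jump parts, $M_t=M_t^{sm,\epsilon}+M_t^{big,\epsilon}$, where $M^{sm,\epsilon}$ collects jumps of size $\le\epsilon$ and $M^{big,\epsilon}$ the compensated jumps of size $>\epsilon$; the latter is a compensated compound Poisson process since $\nu(\{\abs{x}>\epsilon\})<\infty$.

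The point of the split is that the two summands are controlled on different scales. For the small-jump part, Cauchy--Schwarz and the It\^o isometry give
\[
  \norm{M_t^{sm,\epsilon}}_1\le\norm{M_t^{sm,\epsilon}}_2=t^{1/2}\Big(\int_{\abs{x}\le\epsilon}x^2\,\nu(dx)\Big)^{1/2}=:t^{1/2}\sigma_\epsilon,
\]
where $\sigma_\epsilon\to0$ as $\epsilon\to0$ by dominated convergence. For the large-jump part, writing $M_t^{big,\epsilon}=\sum_{i=1}^{N_t}J_i-t\int_{\abs{x}>\epsilon}x\,\nu(dx)$ with $N$ a Poisson process and the $J_i$ the jumps, the triangle inequality together with Wald's identity yields $\norm{M_t^{big,\epsilon}}_1\le 2t\int_{\abs{x}>\epsilon}\abs{x}\,\nu(dx)=:C_\epsilon t$, with $C_\epsilon<\infty$ because square-integrability forces $\int_{\abs{x}>\epsilon}\abs{x}\,\nu(dx)<\infty$. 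Hence $\norm{M_t}_1\le\sigma_\epsilon t^{1/2}+C_\epsilon t$.

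The conclusion then follows from a standard order-of-quantifiers argument: given $\delta>0$, first choose $\epsilon$ so small that $\sigma_\epsilon<\delta/2$, and then, with this $\epsilon$ now fixed, take $t$ small enough that $C_\epsilon t^{1/2}<\delta/2$; for such $t$ one has $\norm{M_t}_1\le\delta t^{1/2}$, so $\limsup_{t\to0}\norm{M_t}_1/t^{1/2}\le\delta$, and letting $\delta\to0$ gives $\norm{M_t}_1=o(t^{1/2})$. I expect the main subtlety to be precisely this interplay: the large-jump part is only $O(t)$ but with an $\epsilon$-dependent constant, while the small-jump part is genuinely of order $t^{1/2}$ but with a coefficient $\sigma_\epsilon$ that can be made arbitrarily small, so $\epsilon$ must be frozen before $t$ is sent to zero. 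This is also exactly where the absence of a Gaussian component is indispensable: a Brownian summand would contribute a fixed multiple of $t^{1/2}$ to $\norm{Y_t}_1$ that no choice of $\epsilon$ could absorb.
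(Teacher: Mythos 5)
Your proof is correct, but it takes a genuinely different route from the paper's. The second-moment asymptotics are handled identically (linearity of mean and variance in $t$). For $\norm{Y_t}_1=o(t^{1/2})$, the paper first symmetrizes by subtracting an independent copy, then invokes the Fourier identity $\norm{U}_1=\frac{1}{\pi}\int (1-\Re \phi_U(s))s^{-2}\,ds$, bounds $1-\Re\phi_{Y_t}(s)$ by $(t\psi(s))\wedge 1$ with $\psi(s)=4\int(1\wedge\abs{sx}^2)\,\nu(dx)$, rescales $s\mapsto t^{-1/2}s$, and concludes with two dominated-convergence arguments after splitting the $s$-integral at $\epsilon/c$. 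You instead center (no symmetrization needed), apply the \levy--It\^o decomposition at a jump-size threshold $\epsilon$, control the compensated small-jump part in $L^2$ via the isometry (giving the $t^{1/2}\sigma_\epsilon$ term) and the compensated large-jump compound Poisson part in $L^1$ via Wald's identity (giving the $C_\epsilon t$ term), and finish by freezing $\epsilon$ before sending $t\to 0$. Your quantifier interchange plays exactly the role of the paper's cut point in the $s$-integral, and your $\epsilon$-threshold on jump sizes mirrors the paper's use of $\psi(x)\le cx^2$ near the origin versus $\psi(x)x^{-2}\to 0$ at infinity. What each buys: the paper's argument is compact once the $L^1$-characteristic-function formula is granted and treats everything inside a single integral, while yours is more probabilistic, avoids both the symmetrization and the Fourier identity, and makes transparent where the no-Gaussian-component hypothesis is indispensable (a Brownian summand would contribute an irreducible multiple of $t^{1/2}$). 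All the ingredients you use are justified: the compensated small- and large-jump bounds, the finiteness of $\int_{\abs{x}>\epsilon}\abs{x}\,\nu(dx)$ from square-integrability, and $\sigma_\epsilon\to 0$ since $\nu$ does not charge the origin.
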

\begin{proof}
We have 
\begin{equation}
  \label{eq:150}
  E[Y_t^2]=Var(Y_t)+E[Y_t]^2=Var(Y_1) t+E[Y_1]^2t^2, 
\end{equation} which shows that $\norm{Y_t}_2\sim t^{1/2}Var(Y_1)^{1/2}$
as $t\rightarrow 0$.

  To show  that $\norm{Y_t}_1=o(t^{1/2})$ as $t\rightarrow 0$ we may 
  assume that $Y$ is
  symmetric. Indeed  let $\mu=E[Y_1]$,
 $Y'$ an independent copy of $Y$ and $\tilde Y_t=Y_t-Y_t'$.
Then  $\tilde Y$ is a symmetric square-integrable \levy\ process and 
 \begin{equation}
   \label{eq:111}
   \norm{Y_t}_1\leq \norm{Y_t-\mu t}_1+ \abs{\mu}\leq
   \norm{Y_t-\mu t-(Y_t'-\mu t)}_1+ \abs{\mu} t=  \norm{\tilde Y_t}_1+ \abs{\mu} t. 
 \end{equation}
Hence assume that $Y$ is symmetric.  Recall, e.g.\ from
\citet[Exercise~5.7]{JHJ_Book}, that for any random variable $U$ we
have 
\begin{equation}
  \label{eq:114}
 \norm{U}_1=\frac{1}{\pi}\int \frac{1-\Re \phi_{U}(s)}{s^2}\,ds,
\end{equation} where $\phi_U$ denotes the characteristic function of
$U$. Using the inequalities  $1-e^{-x}\leq 1\wedge x$ 
and $1-\cos(x)\leq 4(1\wedge  x^2)$ for all $x\geq 0$ it follows that 
  with $\psi(s):=4 \int (1\wedge \abs{sx}^2)\,\nu(dx)$ we have 
\begin{equation}
  \label{eq:113}
 \norm{Y_t}_1\leq \frac{1}{\pi}\int \frac{1-e^{-t\psi(s)}}{s^2}\,ds\leq \frac{1}{\pi}\int
  \frac{\abs{t\psi(s)}\wedge 1}{s^2}\,ds. 
\end{equation} Note that $\psi(s)<\infty$ since $Y$ is
square-integrable. By substitution we get  
\begin{align}
  \label{eq:37}
  \int   \frac{\abs{ t\psi(s)}\wedge 1}{s^2}\,ds\leq 2 t^{1/2} \int_0^\infty
  \frac{\abs{t\psi(t^{-1/2}s)}\wedge 1}{s^2}\,ds. 
\end{align} Hence to complete the proof we need only to show that 
\begin{align}
    \label{eq:124}
     \lim_{t\rightarrow 0} \int_0^\infty
  \frac{\abs{t\psi(t^{-1/2}s)}\wedge 1}{s^2}\,ds =0.
  \end{align}
 Setting $c=4 \int x^2\,\nu(dx)$ we have for all  $\epsilon>0$ 
  \begin{align}
       {}&  \limsup_{t\rightarrow 0} \int_0^\infty
  \frac{\abs{t\psi(t^{-1/2}s)}\wedge 1}{s^2}\,ds \\ \label{eq:117} \leq{}&
  \limsup_{t\rightarrow 0} \int_0^{\epsilon/c} 
  \frac{\abs{t\psi(t^{-1/2}s)}\wedge 1}{s^2}\,ds
  + \limsup_{t\rightarrow 0} \int_{\epsilon/c}^\infty  
  \frac{\abs{t\psi(t^{-1/2}s)}\wedge 1}{s^2}\,ds.   
  \end{align} Using that   $\psi(x)\leq c x^2$ for  $x\geq 0$ we get
  \begin{align}
    \label{eq:120}
     \limsup_{t\rightarrow 0} \int_0^{\epsilon/c} 
  \frac{\abs{t\psi(t^{-1/2}s)}\wedge 1}{s^2}\,ds\leq\epsilon. 
  \end{align} On the other hand, Lebesgue's dominated convergence
  theorem shows that   
  \begin{align}
    \label{eq:115}
    \psi(x)x^{-2} =4\int (x^{-2}\wedge
    s^2)\,\nu(dx)\xrightarrow[x\rightarrow\infty]{} 0,
  \end{align} implying that   $ t\psi(t^{-1/2}s)\rightarrow 0$ as
  $t\rightarrow 0$ for all $s\geq 0$. Thus another application of Lebesgue's
  dominated convergence theorem yields 
  \begin{align}
    \label{eq:121}
    \limsup_{t\rightarrow 0} \int_{\epsilon/c}^\infty  
  \frac{\abs{t\psi(t^{-1/2}s)}\wedge 1}{s^2}\,ds=0, 
  \end{align} which by \eqref{eq:117} and \eqref{eq:120} shows \eqref{eq:124}.
  \end{proof}
Let us proceed with the proof of Lemma~\ref{gen_arg}. 
  \begin{proof}[Proof of Lemma~\ref{gen_arg}]
        Assume first that $x_0,\dots,x_d\in \R$. By induction
        in $d$,  let us show:
    \begin{align}
      \label{eq:18}
     \text{If }  \Big\vert\sum_{k=0}^d \lambda^k x_k\Big\vert\leq 1\text{ for all }\lambda\in
    [-\epsilon,\epsilon]\quad  \text{then}\quad   \Big\vert\sum_{k=0}^d
    x_k\Big\vert\leq 2^{d^2/2+d}\epsilon^{-d}. 
    \end{align} For $d=1,2$  \eqref{eq:18}
    follows by a straightforward argument, so 
    assume $d\geq 3$, \eqref{eq:18} holds for $d-1$ and that the left-hand side
    of  \eqref{eq:18} holds for $d$. We have 
     \begin{align}
      \label{eq:67}
       \Big|\sum_{k=0}^d \lambda^k (\epsilon^k x_k)\Big|\leq 1,\qquad
       \text{for all }\lambda\in [-1,1],
    \end{align} which  by \citet[Aufgabe~77]{Polya} shows that 
    $\abs{x_d\epsilon^d}\leq 2^{d}$ and hence   $\abs{x_d}\leq
    2^{d}\epsilon^{-d}$.  For $\lambda\in [-\epsilon,\epsilon]$,
  the triangle inequality yields 
        \begin{align}
      \label{eq:68}
    \Big|\sum_{k=0}^{d-1} \lambda^k  x_k\Big|\leq 1 +2^d,\quad
    \text{and hence}\quad \Big|\sum_{k=0}^{d-1} \lambda^k 
      \frac{x_k}{1+2^d}\Big|\leq 1.
    \end{align}  The induction hypothesis implies 
    \begin{align}
      \label{eq:51}
      \Big|\sum_{k=0}^{d-1} x_k\Big|\leq \epsilon^{-(d-1)}2^{(d-1)^2+(d-1)}(1+2^d),
    \end{align} and hence another application of the triangle inequality
    shows that 
    \begin{align}
      \label{eq:52}
      \Big|\sum_{k=0}^{d} x_k\Big|\leq{}& \epsilon^{-d}
      2^d+\epsilon^{-(d-1)}2^{(d-1)^2/2+(d-1)}(1+2^d)\\ \leq{}&
      \epsilon^{-d}2^{d^2/2+d}\Big(2^{-d^2/2}+2^{-1/2-d}+2^{-1/2}\Big),
    \end{align} which is less than or equal to
    $\epsilon^{-d}2^{d^2/2+d}$ since $d\geq 3$. This completes  the proof of
    \eqref{eq:18}.

    Now let $x_0,\dots,x_d\in V$. Since $N$ is a seminorm,  Hahn-Banach theorem
    (see \citet[Theorem~3.2]{Rudin}) shows that  there exists a family $\Lambda$ of linear 
    functionals on $V$  such that 
    \begin{align}
      \label{eq:21}
      N(x)=\sup_{F\in \Lambda} \abs{F(x)},\qquad \text{for all }x\in V. 
    \end{align}
    Assuming  that the left-hand side of \eqref{eq:15} is satisfied we
    have 
        \begin{align}
      \label{eq:42}
        \Big|\sum_{k=0}^d
           \lambda^k F(x_k)\Big|\leq 1,\qquad \text{for all }  \lambda\in
         [-\epsilon,\epsilon] \text{ and all } F\in \Lambda, 
    \end{align} which by \eqref{eq:18} shows 
    \begin{align}
      \label{eq:43}
      \Big|F\Big(\sum_{k=0}^d  x_k\Big)\Big|=\Big|\sum_{k=0}^d
      F(x_k)\Big|\leq 2^{d(d-1)} \epsilon^{-d},\qquad \text{for all }
      F\in \Lambda.
    \end{align} This completes the proof.
  \end{proof}

%\bibliographystyle{chicago}
%\bibliography{bibliografi}
\end{document}